\theoremstyle{plain}
\newtheorem{theorem}{Theorem}[section]
\newtheorem{proposition}[theorem]{Proposition}
\theoremstyle{definition}
\newtheorem{definition}[theorem]{Definition}
\newtheorem{example}[theorem]{Example}
\newtheorem{remark}[theorem]{Remark}
\newcommand{\half}{\frac{1}{2}}
\newcommand{\R}{\mathbb{R}}
\newcommand{\N}{\mathbb{N}}
\newcommand{\bV}{\textbf{V}}
\newcommand{\cM}{\mathcal{M}}
\newcommand{\cC}{\mathcal{C}}
\newcommand{\sB}{\mathscr{B}}
\newcommand{\da}{\delta}
\newcommand{\s}{\sigma}
\newcommand{\e}{\varepsilon}
\numberwithin{equation}{section}
\newcommand{\de}{\textnormal{ d}}
\newcommand{\den}{\textnormal{d}}
\newcommand{\I}{\mathds{1}}
\newcommand{\squarebrac}[1]{\left[ #1 \right]}
\newcommand{\curvebrac}[1]{\left( #1 \right)}
\newcommand{\curlybrac}[1]{\left\{ #1 \right\}}
\newcommand{\modd}[1]{\left| #1 \right|}
\newcommand{\norm}[1]{\left\lVert#1\right\rVert}
\newcommand{\BV}{\textnormal{BV}}
\DeclareMathOperator*{\weak}{w-lim}
\DeclareMathOperator*{\vague}{v-lim}
\definecolor{vargreen}{RGB}{0,150,0}
\begin{document}

\title{ Vague and weak convergence of signed measures\thanks{The authors thank Lutz Mattner for helpful comments on weak convergence for general Hausdorff spaces.}}

    \author{Martin Herdegen, Gechun Liang, Osian Shelley\thanks{All authors: University of Warwick, Department of Statistics, Coventry, CV4 7AL, UK; \{m.herdegen, g.liang, o.d.shelley\}@warwick.ac.uk;}}

    \date{\today}

    \maketitle

\begin{abstract}
    Necessary and sufficient conditions for weak and vague convergence of measures are important
    for a diverse host of applications. This paper aims to give a comprehensive description of the relationship between the
    two modes of convergence when the measures are signed, which is largely absent from the literature. Furthermore, when the underlying space is $\R$, we study the relationship between vague convergence of signed measures and the pointwise convergence of their distribution functions.
\end{abstract}

\bigskip
\noindent\textbf{Mathematics Subject Classification (2020):} 28A33, 60B10.

\bigskip
\noindent\textbf{Keywords:} Weak convergence, vague convergence,
signed measures, mass preserving condition.

\section{Introduction}

In this paper, we aim to provide necessary and sufficient conditions
for weak and vague convergence of signed measures. They lie at the
heart of key results in probability theory such as Karamata's
Tauberian theorem (see e.g.~Feller \cite[XIII.5, Theorem
1]{feller_introduction_1971}), whose proof relies on the equivalence
between the vague convergence of finite positive measures and the
pointwise convergence of their distribution functions (at continuity
points of the limiting measure). Motivated by an application in
stochastic control, we extended Karamata's theorem to signed
measures in Herdegen et al.~\cite{herdegen_tauberian_2022}. This
requires to study the relationship between vague convergence of
signed measures and pointwise convergence of their distribution
functions.

For positive measures, the relationship between weak convergence,
vague convergence and convergence of their distribution functions is
well understood; see e.g.~Dieudonné and Macdonald~\cite{dieudonne_treatise_1970}, Vere-Jones \cite{daley_introduction_2003} or Kallenberg
\cite{kallenberg_random_2017,kallenberg_foundations_2021}. However, the
conditions needed to extend this theory to the case of signed measures are seemingly absent from the literature. We
fill this gap by providing a comprehensive description of the
relationship between weak and vague convergence of signed measures
(including their Hahn-Jordan decompositions), as well as pointwise
convergence of their distribution functions.

It turns out that one of the key conditions for the equivalence of
different modes of convergence on a metrisable space is to check whether
or not mass is preserved in the limit. For example, if \emph{mass is
not lost at infinity}, then vague conference is equivalent to weak
convergence. Such a mass preserving condition is usually referred as
tightness condition in the literature. We further show that if
\emph{mass is not lost on compact sets}, then vague convergence
implies the convergence of the positive and negative parts in the
Hahn-Jordan decomposition. Moreover, if \emph{mass is not lost
globally}, then weak convergence of the positive and negative parts
in the Hahn-Jordan decomposition also holds. These results are
summarised in \Cref{table:1}.

When restricted to $\R$, we also provide necessary and
sufficient conditions for the equivalence between vague convergence
of signed measures and pointwise convergence of their distribution
functions (at continuity points of the limiting measure). To this end, we propose a new type of \emph{local (zero) mass preserving
condition} in \Cref{def:no_mass}. It prevents the positive and negative parts of the
singular decompositions to cancel in the limit. Using this new condition, we
give in Theorem \ref{thm:vague_F_equivalence} a clear characterisation of the relationship between vague convergence of signed measures and pointwise convergence of their distribution functions.

\subsection{The definition of vague and weak convergence}\label{subsection:notation_and_defnitinions}

Throughout the paper, let $\Omega$ be a metrisable space and $\sB({\Omega})$ its Borel $\sigma$-algebra.

Let $C(\Omega)$ be the space of all continuous $\R$-valued functions on $\Omega$, $C_b(\Omega)$ the subspace of all $f \in C(\Omega)$ such that $f$ is bounded, $C_0(\Omega)$ the subspace of all $f \in C(\Omega)$ such that for any $\e>0$, there exists a compact set $K_\e\in \sB(\Omega)$ with $\modd{f} < \e$ on $K_\e^c$, and $C_c(\Omega)$ the subspace of all $f \in C(\Omega)$ such that  $f$ has compact support. We have the inclusions $C_c(\Omega) \subseteq C_0(\Omega)\subseteq C_b(\Omega) \subseteq C(\Omega)$.

For a signed measure $\mu$ on $(\Omega, \sB({\Omega}))$, we denote its Hahn-Jordan decomposition by $\mu =\mu^+ - \mu^-$, and its associated variation measure by $\modd{\mu}:= \mu^+ + \mu^-$. The \emph{total variation} of a signed measure $\mu$ is denoted by $\norm{\mu}:= \modd{\mu}(\Omega)$, and we say that $\mu$ is \emph{finite} if $\norm{\mu} < \infty$.

A finite signed measure $\mu$ on $(\Omega, \sB({\Omega}))$ is called a \emph{finite signed Radon measure} if $\modd{\mu}$ is \emph{inner regular}, i.e., for each $A \in \sB(\Omega)$,
  \begin{align*}
 \modd{\mu}(A) &= \sup\{\modd{\mu}(K): K\in \sB(\Omega), K \textnormal{ compact}, K \subset A\}.
    \end{align*}
We denote the set of all finite signed Radon measures on $(\Omega, \sB({\Omega}))$  by $\cM(\Omega)$ and the subset of all finite positive Radon measures by $\cM^+(\Omega)$.
% Note that $(\cM(\Omega), \Vert \cdot \Vert)$ is a Banach space, see [include reference].\todo{Q.}

We now come to the key definition of this paper.

\begin{definition}\label{def:vague}
For $\mu \in \cM(\Omega)$, define the map $I_\mu:C_b(\Omega) \to \R$ by
\begin{equation*}
    I_\mu(f) = \int_\Omega f \de \mu.
\end{equation*}
We say that a sequence $\{\mu_n\} \subset \cM(\Omega)$ converges to $\mu \in \cM(\Omega)$
\begin{enumerate}[\normalfont(a)]
    \item \emph{weakly} if $I_{\mu_n}(f) \to I_{\mu}(f)$ for all $f \in C_b(\Omega)$,\footnote{Weak convergence is sometimes referred to as narrow convergence; see \cite[Section 8.1]{bogachev_measure_2007}.} and we write $$\weak_{n \to \infty}\mu_n = \mu;$$
    \item \emph{vaguely}  if $I_{\mu_n}(f) \to I_{\mu}(f)$ for all $f \in C_c(\Omega)$, and we write $$\vague_{n \to \infty}\mu_n = \mu.$$
\end{enumerate}
\end{definition}

Before making some comments on our definition of vague convergence, it is useful to recall the famous Riesz-Markov-Kakutani Representation Theorem; see \cite[Theorem 14.14]{aliprantis_infinite_1999} for a proof.

\begin{theorem}[Riesz-Markov-Kakutani Representation Theorem]\label{thm:Riesz_Representation}
Let $\Omega$ be locally compact.
    \begin{enumerate}[\normalfont(a)]
        \item The mapping $\mu \mapsto I_{\mu}$, where $I_{\mu}:C_0(\Omega)\to \R$, is an isometric isomorphism from $\cM(\Omega)$ to $(C_0(\Omega))^*$.
        \item The mapping $\mu \mapsto I_{\mu}$, where $I_{\mu}:C_c(\Omega)\to \R$, is a surjective isometry from $\cM(\Omega)$ to $(C_c(\Omega))^*$.
    \end{enumerate}
\end{theorem}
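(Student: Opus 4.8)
The plan is to establish part (a) by checking the three properties it bundles together---that $\mu\mapsto I_\mu$ maps $\cM(\Omega)$ into $(C_0(\Omega))^*$ and is norm-preserving, linear and injective, and surjective---and then to derive part (b) from part (a) by a density argument. Linearity of $\mu\mapsto I_\mu$ is immediate from linearity of the integral, and for $f\in C_0(\Omega)$ the estimate $\modd{I_\mu(f)}\le\int_\Omega\modd{f}\,\de\modd{\mu}\le\norm{f}_\infty\norm{\mu}$ shows $I_\mu\in(C_0(\Omega))^*$ with $\norm{I_\mu}\le\norm{\mu}$, which is the easy half of the isometry.

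For the reverse inequality I would fix $\e>0$, take a Hahn decomposition $\Omega=P\sqcup N$ of $\mu$, and use the inner regularity of $\modd{\mu}$ built into the definition of $\cM(\Omega)$ to choose disjoint compact sets $K_P\subseteq P$ and $K_N\subseteq N$ with $\modd{\mu}(\Omega\setminus(K_P\cup K_N))<\e$. Since $\Omega$ is locally compact and metrisable, an Urysohn-type argument yields $f\in C_c(\Omega)$ with $-1\le f\le1$, $f\equiv1$ on $K_P$ and $f\equiv-1$ on $K_N$; using $\modd{\mu}(\Omega\setminus(K_P\cup K_N))<\e$, a direct computation gives $I_\mu(f)\ge\norm{\mu}-3\e$. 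Letting $\e\downarrow0$ shows $\norm{I_\mu}\ge\norm{\mu}$ already when $I_\mu$ is tested only against $C_c(\Omega)$, and since $C_c(\Omega)\subseteq C_0(\Omega)$ we conclude $\norm{I_\mu}=\norm{\mu}$; injectivity is then automatic, as $I_\mu=0$ forces $\norm{\mu}=0$.

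Surjectivity is the substantive step. Given $L\in(C_0(\Omega))^*$, I would first exploit the Banach-lattice structure of $C_0(\Omega)$ to write $L=L^+-L^-$ with $L^\pm$ positive: for $f\ge0$ put $L^+(f):=\sup\{L(g):g\in C_0(\Omega),\ 0\le g\le f\}$, verify that $L^+$ is positively homogeneous and additive on the positive cone, extend it linearly to a positive bounded functional, and set $L^-:=L^+-L$. It then suffices to represent a positive bounded functional $\Lambda$ on $C_0(\Omega)$ by a finite positive Radon measure, for which I would run the classical construction: define $\nu(U):=\sup\{\Lambda(f):f\in C_c(\Omega),\ 0\le f\le1,\ \operatorname{supp}f\subseteq U\}$ for open $U$, pass to the induced outer measure, use Carath\'eodory together with local compactness and Urysohn to show the Borel sets are measurable and that $\nu$ is inner regular by compacts and outer regular by opens, and finally check $\Lambda(f)=\int_\Omega f\,\de\nu$ first for $f\in C_c(\Omega)$ and then, by density, for all $f\in C_0(\Omega)$; boundedness of $\Lambda$ gives $\nu(\Omega)<\infty$. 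Taking $\mu:=\nu^+-\nu^-$ with $\nu^\pm$ representing $L^\pm$ produces $\mu\in\cM(\Omega)$ with $I_\mu=L$.

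Part (b) then follows from part (a): in the supremum norm $C_c(\Omega)$ is dense in $C_0(\Omega)$, so restriction is an isometric isomorphism $(C_0(\Omega))^*\to(C_c(\Omega))^*$, and composing it with the map from (a) gives a surjective isometry (in fact an isometric isomorphism) $\cM(\Omega)\to(C_c(\Omega))^*$. The main obstacle is the positive Riesz representation used inside the surjectivity step---building the measure $\nu$ and verifying it is a genuine finite Radon measure representing $\Lambda$---with the order-theoretic splitting $L=L^+-L^-$, and in particular the additivity of $L^+$ on the positive cone, being the other point requiring care; this is why we simply invoke \cite[Theorem 14.14]{aliprantis_infinite_1999}.
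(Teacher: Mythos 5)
The paper does not prove this theorem at all: it is quoted as a classical result with a pointer to \cite[Theorem 14.14]{aliprantis_infinite_1999}, so there is no in-paper argument to compare against. Your outline is the standard textbook proof and is essentially sound. The easy inequality $\norm{I_\mu}\le\norm{\mu}$, the reverse inequality via a Hahn decomposition, inner regularity of $\modd{\mu}$ applied separately to $P$ and $N$, and an Urysohn function equal to $1$ on $K_P$ and $-1$ on $K_N$ (your constant $3\e$ could be $2\e$, but this is immaterial), and the deduction of (b) from (a) by density of $C_c(\Omega)$ in $C_0(\Omega)$ for the sup norm are all correct; note that (b) tacitly equips $C_c(\Omega)$ with the supremum norm, which is worth saying explicitly since $C_c(\Omega)$ is not complete. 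The genuinely substantive step, surjectivity, is only sketched: the Riesz--Kantorovich splitting $L=L^+-L^-$ (with the additivity of $L^+$ on the positive cone being the delicate point, as you flag) and the Carath\'eodory construction of $\nu$ from $\Lambda$ each require a page or two of verification, and you would also want to check explicitly that the resulting $\mu=\nu^+-\nu^-$ has inner regular variation so that it lands in $\cM(\Omega)$ as defined here (this follows since $\modd{\mu}$ is dominated by the inner regular finite measure $\nu^++\nu^-$). Given that the paper itself simply invokes the reference, your level of detail is appropriate and I see no gap in the strategy.
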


We also note the following straightforward result that sheds light on the relationship between parts (a) and (b) in Theorem \ref{thm:Riesz_Representation}. It follows directly from the Stone-Weierstraß Theorem
\ref{thm:stoneWeierstrass} and the triangle inequality.

\begin{proposition}
\label{prop:vague convergence Cc vs C0}
Let $\Omega$ be locally compact and $\{\mu_n\}\cup \{\mu\} \subset \cM(\Omega)$ with $\sup_{n \in \N}\norm{\mu_n} < \infty$. Then
\begin{equation}\label{eq:extended_vague}
    I_{\mu_n}(f) \to I_\mu(f)  \textnormal{ for all } f \in C_0(\Omega) \quad \text{ if and only if } \quad I_{\mu_n}(f) \to I_\mu(f) \textnormal{ for all } f \in C_c(\Omega).
\end{equation}

\end{proposition}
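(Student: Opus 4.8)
The forward implication ($C_0 \Rightarrow C_c$) is trivial since $C_c(\Omega) \subseteq C_0(\Omega)$, so the content is the converse. The plan is to approximate an arbitrary $f \in C_0(\Omega)$ uniformly by functions in $C_c(\Omega)$ and then use the uniform bound $M := \sup_{n}\norm{\mu_n} < \infty$ (together with $\norm{\mu} < \infty$) to control the tails. Concretely, fix $f \in C_0(\Omega)$ and $\e > 0$. The definition of $C_0(\Omega)$ gives a compact set $K$ with $|f| < \e$ on $K^c$. Since $\Omega$ is locally compact (and metrisable, hence we may invoke Urysohn-type constructions), one can find $g \in C_c(\Omega)$ with $0 \le g \le 1$ and $g \equiv 1$ on $K$; the natural candidate approximant is $fg \in C_c(\Omega)$, which satisfies $\norm{f - fg}_\infty \le \e$. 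Here I would either cite the Stone--Weierstraß Theorem \ref{thm:stoneWeierstrass} as the excerpt suggests, or the locally-compact Urysohn lemma; the point is that $C_c(\Omega)$ is dense in $C_0(\Omega)$ for the sup-norm.

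Then a standard $3\e$-argument finishes it. Write
\begin{align*}
  |I_{\mu_n}(f) - I_\mu(f)|
  &\le |I_{\mu_n}(f) - I_{\mu_n}(fg)| + |I_{\mu_n}(fg) - I_\mu(fg)| + |I_\mu(fg) - I_\mu(f)| \\
  &\le \norm{f - fg}_\infty \norm{\mu_n} + |I_{\mu_n}(fg) - I_\mu(fg)| + \norm{f - fg}_\infty \norm{\mu} \\
  &\le \e M + |I_{\mu_n}(fg) - I_\mu(fg)| + \e \norm{\mu}.
\end{align*}
Since $fg \in C_c(\Omega)$, the hypothesis gives $I_{\mu_n}(fg) \to I_\mu(fg)$, so the middle term is below $\e$ for $n$ large; letting $n \to \infty$ and then $\e \to 0$ yields $I_{\mu_n}(f) \to I_\mu(f)$.

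The only mild subtlety — and the step I would be most careful about — is justifying the existence of the cutoff $g \in C_c(\Omega)$ with $g \equiv 1$ on $K$: this is exactly where local compactness is used (it fails in general metrisable spaces), and it is the reason the proposition is stated under that hypothesis. Everything else is the triangle inequality and the uniform norm bound on $\{\mu_n\}$, which is why the paper can reasonably call this ``straightforward''. I would phrase the density statement as a one-line consequence of local compactness plus Stone--Weierstraß and not belabour it.
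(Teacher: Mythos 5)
Your proof is correct and takes essentially the same route as the paper, which simply remarks that the result ``follows directly from the Stone--Weierstra\ss{} Theorem and the triangle inequality'': uniform approximation of $f \in C_0(\Omega)$ by elements of $C_c(\Omega)$ plus the uniform bound on $\sup_n \Vert \mu_n \Vert$ and $\Vert \mu \Vert$. Your explicit cutoff construction $fg$ is just a hands-on way of establishing the density that the paper obtains by citing Stone--Weierstra\ss{}, so there is nothing substantive to add.
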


Given that one can find a variety of definitions for vague convergence in the extant literature, some remarks on our definition are in order.

\begin{remark}
(a) Our definition of vague convergence is the most common one found in the literature; see e.g.~Berg et al \cite[Chapter 2]{berg_harmonic_1984}, Dieudonné  and Macdonald \cite[Section XIII.4]{dieudonne_treatise_1970}, Kallenberg \cite[Chapter 5]{kallenberg_foundations_2021} or Klenke \cite[Section 13.2]{klenke_probability_2014}.
% chapter 5 pg 98

(b) In a setting where $\Omega$ is locally compact and motivated by \Cref{thm:Riesz_Representation}, vague convergence is defined for test functions in $C_0(\Omega)$ (rather than in $C_c(\Omega)$) by Folland \cite[Section 7.3]{folland_real_1999}. However, in light of Proposition \ref{prop:vague convergence Cc vs C0}, this stronger definition coincides with our definition if the sequence of measures is uniformly bounded.

(c) When $\Omega$ is a Polish space (i.e., complete and separable), the vague topology on $\cM^+(\Omega)$ (which characterises vague convergence) has alternatively been defined to be generated by the family of mappings $\pi_f:\cM^+(\Omega) \to \R_+$ where $f $ are nonnegative continuous functions with metric bounded support. This is the approach taken by Kallenberg \cite[Section 4.1]{kallenberg_random_2017} and Daley and Vere-Jones \cite[Section A2.6]{daley_introduction_2003}. Basrak and Planinić \cite{basrak_note_2019} show that this definition coincides with our definition using the theory of boundedness due to Hue \cite{hu_introduction_1966}. Moreover, \cite{basrak_note_2019} show explicitly that these vague topologies make $\cM^+(\Omega)$ a Polish space in its own right. In particular, this latter fact convinces us that our definition is the most natural one.
\end{remark}

\subsection{Organisation of the paper}
The remainder of the paper is organised as follows.
\cref{section:Vague_convergence} describes the relationship between vague and weak convergence in $\cM(\Omega)$, including the weak and vague convergence of the positive and negative parts in the Hahn-Jordan decomposition. The results are summarised in \Cref{table:1}. In the special case that $\Omega = \R$, \cref{section:BV} studies the relationship between the vague convergence of a sequence of measures $\{\mu_n\} \subset \cM(\R)$ and the pointwise convergence of their distribution functions $\{F_{\mu_n}\}$.
 \Cref{section:appendix} contains some auxiliary results needed in the main body of the paper.

%%%%%%%%%%%%%%%%%%%%%%%%%%%%%%%%%%%%%%%%%%%%%%%%%%%%%%%%%%%
%%%%%%%%%%%%%%%%%%%%%%%%%%%%%%%%%%%%%%%%%%%%%%%%%%%%%%%%%%

\section{Relationship between vague and weak convergence}\label{section:Vague_convergence}

We first revisit the direct relationship between weak and vague convergence for signed measures. As a warm-up, we recall that vague convergence allows for a loss of mass in the limit, while weak convergence does not.

\begin{example}\label{example:weak_vs_vague_1}
    Let $\mu$ be the zero measure and $\{\mu_n\} \subset \cM(\R)$ be such that $\mu_n := \da_{n} - \da_{-n}$, where for $x \in \R$, $\da_{x}$ denotes the Dirac measure at $x$.  Then $\textnormal{v-lim}_{n \to \infty}\mu_n=\mu$ since for any $f \in C_c(\R)$, \begin{equation*}
        \lim_{n \to \infty} I_{\mu_n}(f) = \lim_{n\to \infty} (f(n) - f(-n)) = 0 = I_{\mu}(f).
    \end{equation*}
Moreover, it holds that $\lim_{n \to \infty}\mu_n(\R) = \mu(\R)$,
i.e. the \textit{signed mass} is preserved.

Now take  $f \in C_b(\R)$ such that
\begin{equation*}
    f(x) = \begin{cases}
        x \hspace{1.5cm}\textnormal{ for } x
        \in (-1,1),\\
        \textnormal{sign}(x)\hspace{0.5cm}\textnormal{ otherwise},
    \end{cases}
\end{equation*}
Thus, we do not have $ \textnormal{w-lim}_{n \to \infty} = \mu$ since
    \begin{equation*}
        2 = \lim_{n \to \infty}I_{\mu_n}(f) \neq \lim_{n \to \infty}I_{\mu}(f) = 0.
    \end{equation*}
\end{example}

Intuitively, what goes wrong in Example \ref{example:weak_vs_vague_1} is that mass is “sent to infinity”. The precise condition that avoids this is \emph{tightness}.

\begin{definition}
A sequence $\{\mu_n\} \subset \cM(\Omega)$ is called \emph{tight} if any $\e>0$ there exists a compact set $K_\e \subset \Omega$ such that
\begin{equation}
\label{def:tight}
\sup_{n \in \N}\modd{\mu_n}(K_\e^c) \leq \e.
\end{equation}
\end{definition}

\begin{remark}
\label{rem:tight}
Since each $\mu \in \cM(\Omega)$ is tight by inner regularity of $\modd{\mu}$, we can replace \eqref{def:tight} by
\begin{equation}
\label{eq:tight:limsup}
\limsup_{n \to \infty} \modd{\mu_n}(K_\e^c) \leq \e.
\end{equation}
\end{remark}
Tightness is exactly the condition that lifts vague to weak
convergence for positive measures. This remains true for signed
measures. The proof of the next result follows from Prohorov's
theorem for signed measures, see \Cref{thm:Prohorov}.\footnote{A
direct proof of \Cref{prop:vague_to_weak_strict}(a) follows also
from a generalisation of \cite[Lemma
5.20]{kallenberg_foundations_2021}.}

\begin{proposition}\label{prop:vague_to_weak_strict}
    Let $\{\mu_n\} \cup \{\mu\}\subset \cM(\Omega)$.
    \begin{enumerate}[\normalfont(a)]
        \item If $\vague_{n \to \infty}\mu_n = \mu$ and $\{\mu_n\}$ is tight, then  $\textnormal{w-}\lim_{n \to \infty}\mu_n = \mu$.
        \item  If $\textnormal{w-}\lim_{n \to \infty}\mu_n = \mu$, then $\vague_{n \to \infty}\mu_n = \mu$. If in addition $\Omega$ is Polish (i.e., complete and separable), then $\{\mu_n\}$ is tight.
    \end{enumerate}
\end{proposition}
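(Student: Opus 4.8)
The plan is to obtain both parts from Prohorov's theorem for signed measures, \Cref{thm:Prohorov}, which ties tightness to relative compactness in the weak topology; part~(a) also admits a short self-contained argument that I record.

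\emph{Part (b).} The implication ``weak $\Rightarrow$ vague'' is immediate: since $C_c(\Omega)\subseteq C_b(\Omega)$, convergence of $I_{\mu_n}(f)$ for every $f\in C_b(\Omega)$ holds in particular for every $f\in C_c(\Omega)$. For the tightness claim, assume $\Omega$ is Polish. Weak convergence makes $\{I_{\mu_n}\}$ a pointwise bounded family of continuous linear functionals on the Banach space $\bigl(C_b(\Omega),\norm{\cdot}_\infty\bigr)$, so the Banach--Steinhaus theorem yields $\sup_n\norm{\mu_n}=\sup_n\norm{I_{\mu_n}}<\infty$. The set $\{\mu_n:n\in\N\}\cup\{\mu\}$, being a convergent sequence together with its limit, is relatively compact in the weak topology, and the necessity direction of Prohorov's theorem then gives tightness of $\{\mu_n\}$.

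\emph{Part (a).} Two routes are available. \emph{Via Prohorov:} I would first extract $\sup_n\norm{\mu_n}<\infty$ from vague convergence and tightness --- applying Banach--Steinhaus to the functionals $g\mapsto\int_K g\,\de\mu_n$ on $C(K)$ for a compact $K$ with $\sup_n\modd{\mu_n}(K^c)\le1$ --- so that, by the sufficiency direction of Prohorov's theorem, every subsequence of $\{\mu_n\}$ has a further subsequence converging weakly to some $\nu\in\cM(\Omega)$; such a $\nu$ is then also a vague limit of that subsequence by part~(b), so uniqueness of vague limits (\Cref{thm:Riesz_Representation}) forces $\nu=\mu$, whence $\{\mu_n\}$ converges weakly to $\mu$ by the subsequence principle. \emph{Directly:} given $f\in C_b(\Omega)$ and $\e>0$, I would use tightness of $\{\mu_n\}$ and inner regularity of $\modd{\mu}$ to pick a compact $K_\e$ with $\sup_n\modd{\mu_n}(K_\e^c)\le\e$ and $\modd{\mu}(K_\e^c)\le\e$, together with a cutoff $\chi\in C_c(\Omega)$ satisfying $0\le\chi\le1$ and $\chi\equiv1$ on $K_\e$. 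Then $f\chi\in C_c(\Omega)$, and from $\modd{f-f\chi}\le\norm{f}_\infty\,\I_{K_\e^c}$ one gets $\modd{I_{\mu_n}(f)-I_{\mu_n}(f\chi)}\le\norm{f}_\infty\e$ for all $n$ and $\modd{I_{\mu}(f)-I_{\mu}(f\chi)}\le\norm{f}_\infty\e$, while $I_{\mu_n}(f\chi)\to I_\mu(f\chi)$ by vague convergence; hence $\limsup_n\modd{I_{\mu_n}(f)-I_\mu(f)}\le2\norm{f}_\infty\e$, and letting $\e\downarrow0$ completes the argument.

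\emph{Main obstacle.} The only substantial input is Prohorov's theorem for signed measures, which is genuinely needed for the tightness half of~(b); the remaining steps are soft. In the direct proof of~(a) the one point to watch is the existence of the compactly supported cutoff $\chi$ --- this is where local compactness of $\Omega$ is used.
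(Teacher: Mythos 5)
Your proposal is correct and follows essentially the route the paper itself indicates: the paper gives no detailed argument, merely deferring to Prohorov's theorem for signed measures (\Cref{thm:Prohorov}) and, in a footnote, to a Kallenberg-style cutoff argument for part (a) --- which are exactly your two routes, fleshed out. One remark is worth making explicit: every version of your argument for part (a) (the compactly supported cutoff $\chi$, the extension of $g\in C(K)$ to $C_c(\Omega)$ for the Banach--Steinhaus step, and the identification of the weak subsequential limit via the injectivity of $\mu\mapsto I_\mu$ on $C_c(\Omega)$ from \Cref{thm:Riesz_Representation}) genuinely requires $\Omega$ to be locally compact, whereas the proposition is stated for a general metrisable space; this is not a defect of your proof but of the statement, since for $\Omega=\ell^2$ one has $C_c(\Omega)=\{0\}$, so the constant tight sequence $\mu_n:=\delta_{e_1}-\delta_{e_2}$ converges vaguely (vacuously) to the zero measure without converging to it weakly. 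Under local compactness your two arguments for (a) and your argument for (b) are complete.
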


    If $\Omega$ is locally compact, the heuristic that vague convergence ignores mass “being sent to infinity” leads us to note that vague convergence in $\cM(\Omega)$ (without loss of signed mass) can be viewed as weak convergence in $\cM(\Omega_\infty)$, where $\Omega_\infty$ denotes the one-point compactification of $\Omega$; see \Cref{def:alexandroff_topology}. To this end, note that a measure $\mu \in \cM(\Omega)$ can be canonically extended to a measure $\mu^\infty \in \cM(\Omega_\infty)$ by setting $ \mu^\infty(A) := \mu(A)$ for $A \in \sB(\Omega)$ and $\modd{\mu^\infty}( \{\infty \}) :=0$. We then have the following result, which follows directly from Proposition \ref{prop:vague convergence Cc vs C0} and \Cref{thm:alexandroff_topology}.

    \begin{proposition}\label{prop:vague_to_weak_compactification}
        Let $\Omega$ be locally compact and $\{\mu_n\}\cup \{\mu\} \subset \cM(\Omega)$ with $\sup_{n \in \N} \Vert \mu_n \Vert < \infty$. Denote by $\mu^\infty_n$ and $\mu^\infty$ the canonical extension of $\mu_n$ and $\mu$, respectively. Then $\textnormal{v-}\lim_{n \to \infty }\mu_n = \mu$ and $\mu_n(\Omega) \to \mu(\Omega)$ if and only if $\textnormal{w-}\lim_{n \to \infty }\mu^\infty_n = \mu^\infty$.
    \end{proposition}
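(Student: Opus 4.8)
The plan is to run everything through the algebraic structure of $C(\Omega_\infty)$. Since $\Omega_\infty$ is compact we have $C_b(\Omega_\infty) = C(\Omega_\infty)$, so weak convergence in $\cM(\Omega_\infty)$ means convergence of $I_{\cdot}(f)$ for all $f\in C(\Omega_\infty)$. The key input from \Cref{thm:alexandroff_topology} is that restriction to $\Omega$ identifies $C(\Omega_\infty)$ with $C_0(\Omega)\oplus\R\one$ (here $\one$ is the constant function $1$): every $f\in C(\Omega_\infty)$ decomposes on $\Omega$ as $f = g + c\one$ with $c:= f(\infty)$ and $g := f|_\Omega - c\one \in C_0(\Omega)$, and conversely every $g\in C_0(\Omega)$ extends to an element of $C(\Omega_\infty)$ by $g(\infty):=0$. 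Because $\modd{\mu_n^\infty}(\{\infty\}) = \modd{\mu^\infty}(\{\infty\}) = 0$ while $\mu_n^\infty$, $\mu^\infty$ restrict to $\mu_n$, $\mu$ on $\sB(\Omega)$, testing such an $f$ yields the ``workhorse identity''
\begin{equation*}
 I_{\mu_n^\infty}(f) = I_{\mu_n}(g) + c\,\mu_n(\Omega), \qquad I_{\mu^\infty}(f) = I_{\mu}(g) + c\,\mu(\Omega),
\end{equation*}
which drives both implications.

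For the ``only if'' direction I would assume $\vague_{n\to\infty}\mu_n = \mu$ and $\mu_n(\Omega)\to\mu(\Omega)$, fix $f\in C(\Omega_\infty)$, and decompose it as above. Since $\sup_{n}\norm{\mu_n}<\infty$, Proposition~\ref{prop:vague convergence Cc vs C0} upgrades vague convergence to $I_{\mu_n}(g)\to I_{\mu}(g)$ for the test function $g\in C_0(\Omega)$; combining this with $c\,\mu_n(\Omega)\to c\,\mu(\Omega)$ via the workhorse identity gives $I_{\mu_n^\infty}(f)\to I_{\mu^\infty}(f)$, and as $f\in C(\Omega_\infty)$ was arbitrary, $\weak_{n\to\infty}\mu_n^\infty = \mu^\infty$. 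For the ``if'' direction I would assume $\weak_{n\to\infty}\mu_n^\infty = \mu^\infty$ and test against two particular families: taking $f=\one\in C(\Omega_\infty)$ gives $\mu_n(\Omega) = \mu_n^\infty(\Omega_\infty)\to\mu^\infty(\Omega_\infty) = \mu(\Omega)$; and taking any $f\in C_c(\Omega)$, extended by $f(\infty):=0$ so that it lies in $C(\Omega_\infty)$, gives $I_{\mu_n}(f)\to I_{\mu}(f)$, i.e.\ $\vague_{n\to\infty}\mu_n = \mu$.

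I do not anticipate a genuine obstacle: the statement is essentially a repackaging of Proposition~\ref{prop:vague convergence Cc vs C0} together with \Cref{thm:alexandroff_topology}. The two points that need a little care are, first, recording that each $\mu_n^\infty$ and $\mu^\infty$ indeed belongs to $\cM(\Omega_\infty)$ — inner regularity of the variation on the compact space $\Omega_\infty$ holds because compact subsets of $\Omega$ remain compact in $\Omega_\infty$ and one may always adjoin $\{\infty\}$ to an approximating compact set — and, second, the bookkeeping at the point $\infty$, where one uses that the canonical extensions carry no mass on $\{\infty\}$ and agree with the original measures on $\sB(\Omega)$, so that the passage between $\int_{\Omega_\infty}f\de\mu_n^\infty$ and $\int_\Omega f|_\Omega\de\mu_n$ is exact. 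Both are handled by the facts collected in \Cref{def:alexandroff_topology} and \Cref{thm:alexandroff_topology}.
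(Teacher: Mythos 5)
Your proof is correct and is exactly the argument the paper intends: the paper gives no written proof but states that the proposition ``follows directly from Proposition~\ref{prop:vague convergence Cc vs C0} and \Cref{thm:alexandroff_topology}'', which is precisely the decomposition $C(\Omega_\infty)|_\Omega = C_0(\Omega) \oplus \R\one$ plus the $C_c$-to-$C_0$ upgrade that you use. Nothing further is needed.
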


    \begin{remark} For \emph{signed} measures, weak convergence in $\cM(\Omega_\infty)$ is strictly weaker than weak convergence in $\cM(\Omega)$. Indeed, \Cref{example:weak_vs_vague_1} gives an example of  $\{\mu_n\}\cup \{\mu\} \subset \cM(\Omega)$ with $\sup_{n \in \N} \Vert \mu_n \Vert < \infty$ such that $\vague_{n \to \infty}\mu_n = \mu$ and $\mu_n(\Omega) \to \mu(\Omega)$ (and hence $\textnormal{w-}\lim_{n \to \infty }\mu^\infty_n = \mu^\infty$), but $\weak_{n \to \infty}\mu_n \neq \mu$.
    \end{remark}

    We next investigate under which conditions vague convergence implies the convergence of the positive and negative parts in the Hahn--Jordan decomposition. The following result shows that the necessary and sufficient extra condition is that no mass is lost on compact sets.

       \begin{proposition}\label{prop:vague_convergence_of_pm}
            Let $\Omega$ be locally compact and $\{\mu_n\}\cup \{\mu\} \subset \cM(\Omega)$. Then $\vague_{n \to \infty}{\mu_n^\pm} = {\mu}^\pm$ if and only if $\vague_{n \to \infty}\mu_n = \mu$ and
                \begin{equation}\label{eq:vague_convergence_of_pm_compact_condition}
                    \limsup_{n \to \infty} \modd{\mu_n}(K) \leq \modd{\mu}(K).
                \end{equation}
                for every compact set $K \subset \Omega$.
        \end{proposition}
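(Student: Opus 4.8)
The plan is to prove both implications directly from the definitions, using the Hahn--Jordan decomposition and the fact that on a locally compact space $\Omega$ one can insert suitable cut-off functions from $C_c(\Omega)$.

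\medskip

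\emph{Sufficiency.} Suppose $\vague_{n\to\infty}\mu_n = \mu$ and \eqref{eq:vague_convergence_of_pm_compact_condition} holds for every compact $K$. Fix $f \in C_c(\Omega)$ with support contained in a compact set $K$, and assume first $0 \le f \le 1$. By local compactness we may choose $g \in C_c(\Omega)$ with $0 \le g \le 1$ and $g \equiv 1$ on $K$; let $K' = \operatorname{supp} g$. Write $I_{\mu_n^+}(f) = I_{\mu_n}(f) + I_{\mu_n^-}(f)$. Since $0 \le f \le g$, we have $0 \le I_{\mu_n^-}(f) \le \modd{\mu_n}(K')$ minus the positive contribution; more precisely one estimates $\limsup_n I_{\mu_n^-}(f) \le \limsup_n\bigl(\modd{\mu_n}(K') - I_{\mu_n}(f^+)\bigr)$ type bounds. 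The cleaner route: apply the general fact that for the variation measures, $\limsup_n \modd{\mu_n}(K) \le \modd{\mu}(K)$ together with vague convergence of $\mu_n$ forces $\vague_{n\to\infty}\modd{\mu_n} = \modd{\mu}$ (this is where the compact-set condition does its work, via testing against $g$ and lower semicontinuity of total variation under vague limits applied to the open interior and the Portmanteau-type inequality). Granting $\vague_{n\to\infty}\modd{\mu_n} = \modd{\mu}$, one recovers $\mu_n^\pm = \tfrac12(\modd{\mu_n} \pm \mu_n)$ and concludes $\vague_{n\to\infty}\mu_n^\pm = \mu^\pm$ by linearity.

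\medskip

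\emph{Necessity.} Suppose $\vague_{n\to\infty}\mu_n^\pm = \mu^\pm$. Then $\vague_{n\to\infty}\mu_n = \mu$ is immediate by linearity, and $\vague_{n\to\infty}\modd{\mu_n} = \modd{\mu}$ follows by adding. It remains to deduce \eqref{eq:vague_convergence_of_pm_compact_condition}. Fix a compact $K$; by local compactness and regularity of $\modd{\mu}$, for $\e > 0$ pick an open $U \supseteq K$ with compact closure and $\modd{\mu}(\overline U) \le \modd{\mu}(K) + \e$, then pick $g \in C_c(\Omega)$ with $\I_K \le g \le \I_{\overline U}$ (Urysohn, using local compactness). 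Then
\[
\limsup_{n\to\infty} \modd{\mu_n}(K) \le \limsup_{n\to\infty} I_{\modd{\mu_n}}(g) = I_{\modd{\mu}}(g) \le \modd{\mu}(\overline U) \le \modd{\mu}(K) + \e,
\]
and let $\e \downarrow 0$.

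\medskip

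The main obstacle is the sufficiency direction, specifically establishing $\vague_{n\to\infty}\modd{\mu_n} = \modd{\mu}$ from vague convergence of $\mu_n$ plus \eqref{eq:vague_convergence_of_pm_compact_condition}. Vague convergence only controls $\mu_n$ against $C_c$ test functions, so one does not directly see $\modd{\mu_n}$; the inequality \eqref{eq:vague_convergence_of_pm_compact_condition} must be combined with a lower-semicontinuity bound $\liminf_n \modd{\mu_n}(U) \ge \modd{\mu}(U)$ on open sets $U$ (which itself needs care for signed measures — it should follow by testing $\mu_n$ against $C_c$ functions approximating the Hahn decomposition of $\mu$ restricted to $U$, together with the already-available vague convergence of $\mu_n$). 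Pinning down this two-sided control on relatively compact sets, and then upgrading it to convergence of the integrals $I_{\modd{\mu_n}}(f)$ for all $f \in C_c(\Omega)$, is the technical heart of the argument; everything else is linearity and routine cut-off constructions.
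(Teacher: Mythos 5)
Your overall strategy coincides with the paper's: reduce everything to showing $\vague_{n\to\infty}\modd{\mu_n}=\modd{\mu}$ and then recover $\mu_n^\pm=\tfrac12(\modd{\mu_n}\pm\mu_n)$. Your necessity direction is complete and correct (you essentially reprove, via a Urysohn cut-off and outer approximation of $K$, the compact-set inequality of the vague Portmanteau theorem for the positive measures $\modd{\mu_n}$, which the paper simply cites).

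The sufficiency direction, however, contains a genuine gap, which you yourself flag: the assertion that \eqref{eq:vague_convergence_of_pm_compact_condition} together with $\vague_{n\to\infty}\mu_n=\mu$ forces $\vague_{n\to\infty}\modd{\mu_n}=\modd{\mu}$ is exactly the content of the proposition's hard direction, and it is stated rather than proved. Two ingredients are missing. First, the lower bound $\liminf_{n}\modd{\mu_n}(\Theta)\ge\modd{\mu}(\Theta)$ for open $\Theta$: your suggestion of testing against $C_c$ functions adapted to the Hahn decomposition of $\mu$ on $\Theta$ is the right idea, but it needs to be carried out --- by inner regularity of $\modd{\mu}$ and Urysohn's lemma one produces $f\in C_c(\Omega)$ with $\modd{f}\le 1$, $\operatorname{supp}(f)\subset\Theta$ and $\int f\,\de\mu\ge\modd{\mu}(\Theta)-\e$, and then $\modd{\mu}(\Theta)-\e\le\lim_n\int f\,\de\mu_n\le\liminf_n\modd{\mu_n}(\Theta)$ (this is the paper's Theorem \ref{thm:appaendix_open}). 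Second, the upgrade from the two set-wise inequalities (upper bound on compacts, lower bound on opens) to vague convergence of the positive measures $\modd{\mu_n}$ is the implication (b) $\Rightarrow$ (a) of the vague Portmanteau theorem (Theorem \ref{thm:portmanteau_extension}); it is standard but not free, and your proposal does not supply it. Until these two steps are in place, the "cleaner route" is a restatement of the goal rather than a proof; the abandoned direct estimate at the start of your sufficiency paragraph does not substitute for them.
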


        \begin{proof}
     First, suppose that $\vague_{n \to \infty}{\mu_n^\pm} = {\mu}^\pm$. Then clearly $\vague_{n \to \infty}\mu_n = \mu$, and \eqref{eq:vague_convergence_of_pm_compact_condition} is satisfied due to the Portmanteau Theorem in the form of \Cref{thm:portmanteau_extension}\normalfont(b).

        Conversely, suppose that $\vague_{n \to \infty}\mu_n = \mu$ and \eqref{eq:vague_convergence_of_pm_compact_condition} is satisfied. By \Cref{thm:appaendix_open}, for every open set $\Theta \subset \Omega$,
        \begin{equation*}
            \liminf_{n \to \infty} \modd{\mu_n}(\Theta) \geq \modd{\mu}(\Theta).
        \end{equation*}
        Thus, \Cref{thm:portmanteau_extension}\normalfont(b)  gives $\vague_{n \to \infty}\modd{\mu_n} = \modd{\mu}$. Now $\vague_{n \to \infty}{\mu_n}^\pm = {\mu}^\pm$ follows by noting that
        \begin{equation*}
            \mu_n^+ =\half(\modd{\mu_n}+\mu_n)\quad \textnormal{and}\quad\mu_n^- =\half(\modd{\mu_n}-\mu_n). \qedhere
        \end{equation*}
        \end{proof}

        Note that Condition \eqref{eq:vague_convergence_of_pm_compact_condition} does not restrict  “total mass being lost at infinity”.  By imposing an additional restriction to  mitigate this possibility, we can strengthen Proposition \ref{prop:vague_convergence_of_pm} to deduce that $\weak_{n \to \infty}{{\mu_n}^\pm} = {{\mu_n}^\pm}$.

        \begin{proposition}\label{prop:weak variation from vague}
Let $\Omega$ be locally compact and $\{\mu_n\}  \cup \{\mu\}\subset \cM(\Omega)$. Then $\weak_{n \to \infty}{\mu_n^\pm} = {\mu}^\pm$ if and only if $\vague_{n \to \infty}\mu_n = \mu$ and
                \begin{equation}\label{eq:prop:weak variation from vague}
\limsup_{n\to \infty}\norm{\mu_n} \leq \norm{\mu}.
                \end{equation}
        \end{proposition}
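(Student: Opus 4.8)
The plan is to prove the two implications separately, with essentially all the work in the ``if'' direction.

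For the ``only if'' direction, assume $\weak_{n\to\infty}\mu_n^\pm=\mu^\pm$. Since $I_{(\cdot)}(f)$ is linear in the measure and $\mu_n=\mu_n^+-\mu_n^-$, $\mu=\mu^+-\mu^-$, this immediately gives $\weak_{n\to\infty}\mu_n=\mu$, hence $\vague_{n\to\infty}\mu_n=\mu$ by \Cref{prop:vague_to_weak_strict}\normalfont(b). Evaluating the weak convergence of $\mu_n^\pm$ at the constant function $\one\in C_b(\Omega)$ yields $\mu_n^\pm(\Omega)=I_{\mu_n^\pm}(\one)\to I_{\mu^\pm}(\one)=\mu^\pm(\Omega)$, so that $\norm{\mu_n}=\mu_n^+(\Omega)+\mu_n^-(\Omega)\to\mu^+(\Omega)+\mu^-(\Omega)=\norm{\mu}$; in particular \eqref{eq:prop:weak variation from vague} holds (in fact with equality).

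For the ``if'' direction, assume $\vague_{n\to\infty}\mu_n=\mu$ and \eqref{eq:prop:weak variation from vague}. The key tool is lower semicontinuity of the variation on open sets: by \Cref{thm:appaendix_open}, $\liminf_{n\to\infty}\modd{\mu_n}(\Theta)\ge\modd{\mu}(\Theta)$ for every open $\Theta\subseteq\Omega$. Applying this with $\Theta=K^c$ for an arbitrary compact $K\subseteq\Omega$, using $\modd{\mu_n}(K)=\norm{\mu_n}-\modd{\mu_n}(K^c)$ and \eqref{eq:prop:weak variation from vague}, gives
\begin{equation*}
\limsup_{n\to\infty}\modd{\mu_n}(K)\le\limsup_{n\to\infty}\norm{\mu_n}-\liminf_{n\to\infty}\modd{\mu_n}(K^c)\le\norm{\mu}-\modd{\mu}(K^c)=\modd{\mu}(K),
\end{equation*}
which is precisely \eqref{eq:vague_convergence_of_pm_compact_condition}. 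Hence \Cref{prop:vague_convergence_of_pm} yields $\vague_{n\to\infty}\mu_n^\pm=\mu^\pm$. To upgrade this to weak convergence it suffices, by \Cref{prop:vague_to_weak_strict}\normalfont(a), to show that $\{\mu_n^+\}$ and $\{\mu_n^-\}$ are tight; since $\mu_n^\pm\le\modd{\mu_n}$, it is enough that $\{\mu_n\}$ be tight. Fix $\e>0$. By inner regularity of $\modd{\mu}$ there is a compact $K\subseteq\Omega$ with $\modd{\mu}(K^c)<\e$, and by local compactness of $\Omega$ there is an open set $U$ with compact closure $\overline U$ and $K\subseteq U$; then $\modd{\mu}(U^c)\le\modd{\mu}(K^c)<\e$, i.e.\ $\modd{\mu}(U)>\norm{\mu}-\e$. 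As $\overline U^c\subseteq U^c$, lower semicontinuity on $U$ together with \eqref{eq:prop:weak variation from vague} gives
\begin{equation*}
\limsup_{n\to\infty}\modd{\mu_n}(\overline U^c)\le\limsup_{n\to\infty}\norm{\mu_n}-\liminf_{n\to\infty}\modd{\mu_n}(U)\le\norm{\mu}-\modd{\mu}(U)<\e.
\end{equation*}
Since $\overline U$ is compact and $\e>0$ was arbitrary, \Cref{rem:tight} shows $\{\mu_n\}$ is tight, which completes the proof.

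The main obstacle is the ``if'' direction, and specifically the observation that the single global condition \eqref{eq:prop:weak variation from vague}, via lower semicontinuity of the variation on open sets, simultaneously delivers both the compact-set condition \eqref{eq:vague_convergence_of_pm_compact_condition} needed to invoke \Cref{prop:vague_convergence_of_pm} and the tightness needed to invoke \Cref{prop:vague_to_weak_strict}\normalfont(a). Local compactness enters only to enlarge a compact set to one with compact closure in its interior, which is what allows the open-set lower bound to ``capture'' the mass concentrated near $K$.
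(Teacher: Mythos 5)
Your proof is correct and follows essentially the same route as the paper: linearity plus testing against $\one$ for the ``only if'' direction, and for the ``if'' direction the combination of \Cref{thm:appaendix_open} applied to complements of compact sets to obtain \eqref{eq:vague_convergence_of_pm_compact_condition} and tightness, then \Cref{prop:vague_convergence_of_pm} and \Cref{prop:vague_to_weak_strict}(a). The only cosmetic difference is that you derive the compact-set condition directly via the $\limsup$/$\liminf$ inequality where the paper argues by contradiction.
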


        \begin{proof}
First,  suppose that $\weak_{n \to \infty}{\mu_n^\pm} = {\mu}^\pm$. Then $\weak_{n \to \infty}{\mu_n} = {\mu}$ and $\weak_{n \to \infty}{|\mu_n|} = {|\mu|}$. This implies in particular that  $\vague_{n \to \infty}{\mu_n} = {\mu}$ and
            \begin{equation}\label{eq:lim_agrees}
                \lim_{n\to \infty}\norm{\mu_n} =     \lim_{n\to \infty} \int_\Omega \de |\mu_n| = \int_\Omega \de |\mu| =
\norm{\mu}.
            \end{equation}

        Conversely, suppose that  $\vague_{n \to \infty}\mu_n = \mu$ and \eqref{eq:prop:weak variation from vague} is satisfied. By Propositions \ref{prop:vague_convergence_of_pm} and \ref{prop:vague_to_weak_strict}, it suffices to show that \eqref{eq:vague_convergence_of_pm_compact_condition} is satisfied and the sequence $\{\mu_n\}$ is tight.

        First, we establish \eqref{eq:vague_convergence_of_pm_compact_condition}. Seeking a contradiction, suppose there exists a compact set $K \subset \Omega$ such that
        \begin{equation}
        \label{eq:pf:prop:weak variation from vague:ineq 01}
            \limsup_{n \to \infty} \modd{\mu_n}(K) > \modd{\mu}(K).
        \end{equation}
        Since $K^c$ is open, it follows from Theorem \ref{thm:appaendix_open} that
        \begin{equation}
        \label{eq:pf:prop:weak variation from vague:ineq 02}
            \liminf_{n \to \infty} \modd{\mu_n}(K^c) \geq \modd{\mu}(K^c).
        \end{equation}
        Adding \eqref{eq:pf:prop:weak variation from vague:ineq 01} and \eqref{eq:pf:prop:weak variation from vague:ineq 02}, it follows that
        \begin{align*}
        \limsup_{n \to \infty} \norm{\mu_n} &= \limsup_{n \to \infty} \modd{\mu_n} (\Omega) \geq \limsup_{n \to \infty} \modd{\mu_n}(K) +  \liminf_{n \to \infty} \modd{\mu_n}(K^c)  > \modd{\mu}(\Omega)= \norm{\mu},
        \end{align*}
        and we arrive at a contradiction to \eqref{eq:prop:weak variation from vague}.

Next, we show that the sequence $\{\mu_n\}$ is tight.  Let $\e > 0$. By inner regularity of $\mu$, there exists a compact set $K  \subset \Omega$ such that $\modd{\mu}(K^c) \leq \e$.  By local compactness of
        $\Omega$, there exists an open set $L \supset K$ such that its closure $\overline{L} =: K_\e$ is compact. Using \eqref{eq:prop:weak variation from vague} and Theorem \ref{thm:appaendix_open}, we obtain

        \begin{align*}
            \limsup_{n \to \infty}\modd{\mu_n}(K_\e^c) &=  \limsup_{n \to \infty}\curvebrac{ \norm{\mu_n} - \modd{ \mu_n}(K_\e) }\leq \limsup_{n \to \infty}\curvebrac{ \norm{\mu_n} - \modd{\mu_n}(L) } \\
        &\leq \norm{\mu} - \liminf_{n \to \infty}\modd{ \mu_n}(L)\leq \norm{\mu} - \modd{\mu}(L)\leq  \norm{\mu}  -
            \modd{\mu}(K)  = \mu(K^c)\leq \e. \qedhere
        \end{align*}
\end{proof}

    To summarise, starting from vague convergence $\vague_{n \to \infty}\mu_n = \mu$,
    \Cref{prop:vague_to_weak_strict} tells us that we get $\weak_{n \to \infty}\mu_n = \mu$ if
    mass is not “lost at infinity”.  \Cref{prop:vague_convergence_of_pm} asserts that
    if mass is not “lost on compact sets”, then we get $\vague_{n \to \infty}\mu_n^\pm = \mu^\pm$. Finally, \Cref{prop:weak variation from vague} tells us that if mass is not  “lost globally”, then we even get $\weak_{n \to \infty}\mu_n^\pm = \mu^\pm$. These results are summarised in \Cref{table:1}.

    \begin{table}[h!]
        \centering
        \caption{{$\Omega$ is a  ({Polish$^\star$, locally compact$^{\star\star}$})
            metrisable space and $\{\mu_n\}\cup \{\mu\} \subset \cM(\Omega)$. }
            }
        \label{table:1}
        \begin{tabular}{||c  c  c||}
         \hline
         \textbf{Condition(s) A} &  &   \textbf{Condition(s) B}  \\ [0.5ex]
         \hline\hline
         \makecell{$\vague_{n \to \infty} \mu_n = \mu$,\\ and $\forall \e > 0$, $\exists$ compact set $K_\e$\\ such that $\limsup_{n \to \infty} \modd{\mu_n}(K_\e^c) \leq \e$} & \makecell{$\mathbf{\Rightarrow}$ \\ ${\stackrel{\star}{\Leftarrow}}$}  &$\weak_{n \to \infty} \mu_n = \mu$ \\
         \hline
         \makecell{$\vague_{n \to \infty} \mu_n = \mu$,\\ and $\forall$ compact $K \subset \Omega$ \\ $\limsup_{n \to \infty}\modd{\mu_n}(K) \leq \modd{\mu}(K)$} & ${\stackrel{\star\star}{\Leftrightarrow}}$  & $\vague_{n \to \infty} {\mu_n}^\pm = {\mu}^\pm$  \\ [1ex]
         \hline
         \makecell{$\vague_{n \to \infty} \mu_n = \mu$,\\ and $\limsup_{n \to \infty}\norm{\mu_n}\leq \norm{\mu}$} & ${\stackrel{\star\star}{\Leftrightarrow}}$   & $\weak_{n \to \infty} {\mu_n}^\pm = {\mu}^\pm$   \\ [1ex]
         \hline
        \end{tabular}
    \end{table}

\section{Vague convergence and convergence of distribution functions}\label{section:BV}
{In this section, we study the special case that $\Omega = \R$ (with the usual order topology) and link vague convergence on $\R$ to the pointwise convergence of their distribution functions (at continuity points of the limiting measure). To this end, we first need to introduce some further pieces of notation.

Let $\BV(\R)$ denote the space of all functions of bounded variation on $\R$. For $F \in \BV(\R)$ and $x \in \R$, denote the total variation of $F$ on $(-\infty, x]$ by $\bV_F(x)$ and set $F^\uparrow(x) := \half(\bV_{F}(x) + F(x))$ and $F^\downarrow(x) := \half(\bV_{F}(x) - F(x))$. Note that $\bV_F, F^\uparrow, F^\downarrow :\R \to [0,\infty]$ are nondecreasing functions.

For any $\alpha \in \R$ and $\mu \in \cM(\R)$, the \emph{distribution function of $\mu$, centred at $\alpha$,} is the function $F_{\mu}^{(\alpha)} \in \BV(\R)$ defined by
\begin{equation*}
    F_{\mu}^{(\alpha)}(x) := \begin{cases}
        \hspace{.25cm}\mu((\alpha,x])\quad \textnormal{if }x \geq \alpha,\\
        -\mu((x,\alpha])\quad\textnormal{if }x < \alpha.
    \end{cases}
\end{equation*}
Note that $F_{\mu}^{(\alpha)}$ is right-continuous, and for any $a \leq b$ with $a, b \in \R$,
\begin{equation}
\label{eq:dist:meas:rel}
    F_{\mu}^{(\alpha)}(b) - F_{\mu}^{(\alpha)}(a) = \mu((a,b]).
\end{equation}

The relationship \eqref{eq:dist:meas:rel} between distribution functions and signed measures is bijective, which follows from the following result; for a proof see \cite[Theorem 5.13]{leoni_first_2017}.

\begin{theorem}\label{thm:BVloc_to_g_signed_measure}
    Let $F \in \BV(\R)$ be right-continuous. Then there exists a unique $\mu_F\in \cM(\R)$ such that
    \begin{equation*}
        \mu_F((a,b]) = F(b) - F(a)
    \end{equation*}
    for all $a \leq b$ with $a,b \in \R$.  Moreover, $\modd{\mu_F} = \mu_{\bV_F}$.
\end{theorem}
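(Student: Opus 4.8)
The plan is to realise $\mu_F$ via the classical one-sided Lebesgue--Stieltjes correspondence for bounded, nondecreasing, right-continuous functions applied to the Jordan parts of $F$, and then to pin down the variation measure by comparing $\modd{\mu_F}$ with $\mu_{\bV_F}$ on the $\pi$-system of half-open intervals. For existence: since $F\in\BV(\R)$ has finite total variation, the functions $\bV_F$, $F^\uparrow=\half(\bV_F+F)$ and $F^\downarrow=\half(\bV_F-F)$ are bounded, nondecreasing and right-continuous (right-continuity of $\bV_F$, and hence of $F^\uparrow$ and $F^\downarrow$, follows from that of $F$ by the standard jump relation for the variation function). Applying the Lebesgue--Stieltjes construction to $F^\uparrow$ and $F^\downarrow$ yields finite positive Borel measures $\mu_{F^\uparrow},\mu_{F^\downarrow}$ with $\mu_{F^\uparrow}((a,b])=F^\uparrow(b)-F^\uparrow(a)$ and $\mu_{F^\downarrow}((a,b])=F^\downarrow(b)-F^\downarrow(a)$ for all $a\le b$. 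Since $\R$ is Polish, every finite Borel measure is Radon, so $\mu_{F^\uparrow},\mu_{F^\downarrow}\in\cM^+(\R)$; then $\mu_F:=\mu_{F^\uparrow}-\mu_{F^\downarrow}\in\cM(\R)$, and $F=F^\uparrow-F^\downarrow$ gives $\mu_F((a,b])=F(b)-F(a)$.

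For uniqueness, let $\mathcal{I}:=\{(a,b]:a\le b\}\cup\{\emptyset\}$; this is a $\pi$-system that generates $\sB(\R)$ and contains the exhausting sequence $\{(-n,n]\}_{n\in\N}$. If $\nu_1,\nu_2\in\cM(\R)$ both satisfy the displayed identity, then $\nu_1^++\nu_2^-$ and $\nu_1^-+\nu_2^+$ are finite positive measures agreeing on $\mathcal{I}$, hence on $\sigma(\mathcal{I})=\sB(\R)$ by Dynkin's $\pi$-$\lambda$ theorem; rearranging gives $\nu_1=\nu_2$. I will reuse this uniqueness principle below.

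It remains to show $\modd{\mu_F}=\mu_{\bV_F}$. Applying the first step to the bounded, nondecreasing, right-continuous function $\bV_F=F^\uparrow+F^\downarrow$ gives $\mu_{\bV_F}\in\cM^+(\R)$, and by additivity of the construction $\mu_{\bV_F}=\mu_{F^\uparrow}+\mu_{F^\downarrow}$. For any finite Borel partition $\{E_i\}$ of a set $E$ we have $\sum_i\modd{\mu_F(E_i)}\le\sum_i\bigl(\mu_{F^\uparrow}(E_i)+\mu_{F^\downarrow}(E_i)\bigr)=\mu_{\bV_F}(E)$, so taking suprema yields $\modd{\mu_F}\le\mu_{\bV_F}$ on $\sB(\R)$. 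For the reverse inequality it suffices, by the uniqueness principle, to check it on $\mathcal{I}$: fix $a\le b$. By additivity of the variation function, $\bV_F(b)-\bV_F(a)$ equals the pointwise variation $\sup\bigl\{\sum_{j=1}^k\modd{F(x_j)-F(x_{j-1})}:a=x_0<\dots<x_k=b\bigr\}$; since $F(x_j)-F(x_{j-1})=\mu_F((x_{j-1},x_j])$ and the intervals $(x_{j-1},x_j]$ partition $(a,b]$, each such sum is $\le\modd{\mu_F}((a,b])$, whence $\mu_{\bV_F}((a,b])=\bV_F(b)-\bV_F(a)\le\modd{\mu_F}((a,b])$. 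Combining the two inequalities on $\mathcal{I}$ gives $\modd{\mu_F}=\mu_{\bV_F}$.

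The routine ingredients are the Lebesgue--Stieltjes construction and the $\pi$-$\lambda$ uniqueness argument. The one step that requires some care is the reverse inequality $\modd{\mu_F}\ge\mu_{\bV_F}$ in the last paragraph, i.e.\ matching the \emph{pointwise} variation function $\bV_F$ (defined through finite partitions) with the total-variation \emph{measure} $\modd{\mu_F}$: this relies on the additivity identity $\bV_F(b)=\bV_F(a)+\Var(F;[a,b])$ together with right-continuity of $F$, which is what makes the half-open-interval formula $\mu_{\bV_F}((a,b])=\bV_F(b)-\bV_F(a)$ (and its analogue for $\mu_F$) the correct one.
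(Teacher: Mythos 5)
Your proof is correct and complete. Note that the paper does not actually prove this theorem; it simply cites Leoni, \emph{A First Course in Sobolev Spaces}, Theorem 5.13, so there is no in-paper argument to compare against. Your route --- Jordan decomposition $F = F^\uparrow - F^\downarrow$ into bounded nondecreasing right-continuous parts, the classical Lebesgue--Stieltjes/Carath\'eodory construction for each, a $\pi$-$\lambda$ uniqueness argument on the half-open intervals (correctly handled for finite signed measures by passing to the positive measures $\nu_1^+ + \nu_2^-$ and $\nu_1^- + \nu_2^+$ and using the exhausting sequence $(-n,n]$), and the two-sided comparison of $\modd{\mu_F}$ with $\mu_{\bV_F}$ --- is the standard textbook proof and all the ingredients you invoke (right-continuity of the variation function at points of right-continuity of $F$, the partition characterisation of the total variation measure, additivity of pointwise variation over adjacent intervals) are used correctly. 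One small point of phrasing: saying it ``suffices by the uniqueness principle to check the reverse inequality on $\mathcal{I}$'' is slightly misleading, since an inequality between measures on a generating $\pi$-system does not by itself propagate to the generated $\sigma$-algebra; what actually closes the argument is that the global inequality $\modd{\mu_F}\le\mu_{\bV_F}$ together with the reverse inequality on $\mathcal{I}$ yields \emph{equality} on $\mathcal{I}$, which then extends by the uniqueness principle --- and your final sentence makes clear this is what you mean.
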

}

Let $[-\infty,\infty]$ be the (affine) extended real line (with the order topology). Any $\mu \in \cM(\R)$ can canonically be extended to $\cM([-\infty,\infty])$ by setting $\modd{\mu}(\{\pm \infty\}) := 0$.  Similarly, for $\alpha \in \R$,  $F^{(\alpha)}_{\mu}$ can canonically be extended to $[-\infty,\infty]$ by setting $F^{(\alpha)}_\mu(\pm\infty) := \lim_{x \to \pm \infty}F^{(\alpha)}_\mu(x)$. Finally, we can define $F^{(-\infty)}, F^{(+\infty)} \in \BV(\R)$ by
\begin{equation*}
F_\mu^{(-\infty)}(x) := \mu((-\infty,x]) \quad \text{and} \quad F_\mu^{(+\infty)}(x) := -\mu((x,\infty)), \quad x \in \R,
\end{equation*}
respectively, which again can canonically be extended to $[-\infty,\infty]$. Note that $F_\mu^{(-\infty)}$ is usually called the distribution function of $\mu$ and denoted by $F_\mu$.

Last but not least, we say that $x \in \R$ is a \emph{continuity point} of $\mu \in \cM(\R)$ if $\mu(\{x\}) = 0$.

\subsection{Relationship between the convergence of distribution functions and vague convergence}

We start our discussion on the relationship between the convergence of distribution functions and vague convergence by recalling the key result for \emph{positive} measures. This type of result is essentially known -- at least under stronger conditions, see e.g.~\cite[Proposition 7.19]{folland_real_1999}. It will follow as a corollary of our main result, Theorem \ref{thm:vague_F_equivalence} below.

\begin{theorem}
\label{thm:vague:pos}
Let $\{\mu_n\}\cup \{\mu\}\subset \cM^+(\R)$ and $\alpha \in \R$ be a continuity point of $\mu$. Then the following are equivalent:
\begin{enumerate}
\item $F^{(\alpha)}_{\mu_n} \to F^{(\alpha)}_\mu$ at the continuity points of $\mu$.
\item $\vague_{n \to \infty}\mu_n = \mu$.
\end{enumerate}
Moreover, if $\alpha = -\infty$ or $\alpha =+\infty$, the equivalence remains true if we require in addition that $\lim_{K \downarrow -\infty}\squarebrac{\limsup_{n \to \infty} \mu_n((-\infty, K])} =0$  when $\alpha = -\infty$, or $\lim_{K \uparrow \infty}\squarebrac{\limsup_{n \to \infty} \mu_n( (K, \infty))} = 0$ when $\alpha = +\infty$.
\end{theorem}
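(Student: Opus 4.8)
The plan is to prove the two implications (1)$\Rightarrow$(2) and (2)$\Rightarrow$(1) for positive measures directly, exploiting the fact that for $\mu_n \in \cM^+(\R)$ the distribution function $F_{\mu_n}^{(\alpha)}$ is nondecreasing, so that the usual Helly-type arguments apply without the complications of the Hahn--Jordan decomposition. Throughout I fix a continuity point $\alpha \in \R$ of $\mu$ and abbreviate $F_n := F_{\mu_n}^{(\alpha)}$, $F := F_\mu^{(\alpha)}$.

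For (1)$\Rightarrow$(2): given $f \in C_c(\R)$, pick $a<b$ with $\mathrm{supp}(f) \subset (a,b]$ and with $a,b$ continuity points of $\mu$ (possible since $\mu$ has at most countably many atoms). On $[a,b]$ approximate $f$ uniformly by step functions with breakpoints at continuity points of $\mu$; since $\int f\,\de\mu_n$ is an $F_n$-Stieltjes integral, pointwise convergence $F_n \to F$ at those breakpoints together with the uniform bound $\sup_n \mu_n((a,b]) = \sup_n (F_n(b)-F_n(a)) < \infty$ (which itself follows from $F_n(b)\to F(b)$ and $F_n(a)\to F(a)$) yields $\int f\,\de\mu_n \to \int f\,\de\mu$ by a standard $3\e$ estimate. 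This gives $\vague_{n\to\infty}\mu_n = \mu$.

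For (2)$\Rightarrow$(1): fix a continuity point $x$ of $\mu$ with, say, $x \geq \alpha$ (the case $x<\alpha$ is symmetric). For $\e>0$ choose continuous functions $g_\e \leq \I_{(\alpha,x]} \leq h_\e$ in $C_c(\R)$ that agree with $\I_{(\alpha,x]}$ outside $\e$-neighbourhoods of $\alpha$ and $x$; vague convergence gives $\liminf_n F_n(x) \geq \liminf_n \int g_\e\,\de\mu_n = \int g_\e\,\de\mu$ and $\limsup_n F_n(x) \leq \int h_\e\,\de\mu$, and letting $\e\downarrow 0$, using $\mu(\{\alpha\})=\mu(\{x\})=0$, forces $\int g_\e\,\de\mu,\int h_\e\,\de\mu \to F(x)$. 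Hence $F_n(x) \to F(x)$ at every continuity point. The main obstacle here is purely bookkeeping: when $\alpha$ is itself a finite continuity point there is nothing extra to do, but one must be a little careful that the approximating functions have compact support and that the endpoint $\alpha$ contributes no mass in the limit.

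The remaining assertion concerns $\alpha = \pm\infty$, and this is where the extra hypothesis enters. Note $F_\mu^{(-\infty)}(x) = F_\mu^{(\beta)}(x) + \mu((-\infty,\beta])$ for any finite $\beta$, so convergence of $F_n^{(-\infty)}$ at continuity points is equivalent to convergence of $F_n^{(\beta)}$ at continuity points \emph{together with} convergence of the constants $\mu_n((-\infty,\beta])$; the latter is exactly what can fail under mere vague convergence (mass escaping to $-\infty$), and the condition $\lim_{K\downarrow-\infty}\limsup_n \mu_n((-\infty,K]) = 0$ is precisely what restores it: write $\mu_n((-\infty,\beta]) = \mu_n((-\infty,K]) + \mu_n((K,\beta])$ with $K<\beta$ continuity points, apply the finite-$\alpha$ case to the second term, and let $K\downarrow-\infty$. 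The case $\alpha=+\infty$ is handled symmetrically using $F_\mu^{(+\infty)}(x) = F_\mu^{(\beta)}(x) - \mu((x,\infty))$ and the analogous tail condition. I expect the genuinely delicate point to be organising the equivalence so that the positive-measure statement cleanly falls out of (and is consistent with) the general signed Theorem~\ref{thm:vague_F_equivalence} promised later, but as a standalone result the argument above is routine.
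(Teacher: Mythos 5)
Your proof is correct, but it takes a genuinely different route from the paper. The paper deduces Theorem~\ref{thm:vague:pos} as a corollary of the general signed-measure result, Theorem~\ref{thm:vague_F_equivalence}: for ``(a) $\Rightarrow$ (b)'' it only checks that convergence of $F^{(\alpha)}_{\mu_n}$ at two continuity points $b_1<b_2$ bracketing a compact set $K$, combined with positivity, yields $\sup_n \mu_n(K)<\infty$ (the boundedness hypothesis of Theorem~\ref{thm:vague_F_equivalence}(a)); for ``(b) $\Rightarrow$ (a)'' it checks that the ``no mass at continuity points'' hypothesis of Theorem~\ref{thm:vague_F_equivalence}(b) is automatic for positive measures via the vague Portmanteau Theorem~\ref{thm:portmanteau_extension}(b) and $\sigma$-continuity of $\mu$. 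You instead give a self-contained classical argument: step-function approximation of $f\in C_c(\R)$ with breakpoints at continuity points (where positivity gives the error bound $\|f-s\|_\infty\,\mu_n((a,b])$ from the convergent quantity $F_n(b)-F_n(a)$), and a sandwich $g_\e\le \I_{(\alpha,x]}\le h_\e$ of continuous compactly supported functions for the converse, with $\mu(\{\alpha\})=\mu(\{x\})=0$ collapsing the gap. Both are sound; your version is more elementary and independent of the signed machinery (which is why monotonicity of $F_n$ matters so much in your error estimates), while the paper's version is shorter given Theorem~\ref{thm:vague_F_equivalence} and makes transparent exactly which extra hypotheses of the signed theorem become automatic in the positive case. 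Your reduction of the $\alpha=\pm\infty$ case via $F_\mu^{(-\infty)}(x)=F_\mu^{(\beta)}(x)+\mu((-\infty,\beta])$ and the tail condition is also fine (and in fact shows that for $\alpha=-\infty$ the tail condition is implied by assertion (1), so the content is really (1) $\Leftrightarrow$ (2) plus tail); the paper simply declares this extension ``straightforward''.
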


\begin{remark}
(a) The assumption that $\alpha$ is a continuity point of $\mu$ in Theorem \ref{thm:vague:pos} is necessary.
Indeed, let $\mu_n := \da_{1/n}$ and $\mu := \da_0$. Then $\vague_{n \to \infty}\mu_n = \mu$ but
\begin{equation*}
F^{(0)}_{\mu_n}(x)  = 0 \not \to  -1 = F^{(0)}_{\mu}(x), \quad x < 0.
\end{equation*}

(b) As a sanity check, one notes that if $\{\mu_n\} \cup\{\mu\} \subset \cM^+(\R)$ are probability measures, whence $\limsup_{n \to \infty} \Vert \mu_n \Vert = \Vert \mu \Vert = 1$, then \Cref{thm:vague:pos} together with Proposition \ref{prop:weak variation from vague} shows that $\weak_{n \to \infty} \mu_n = \mu$ if and only if $F^{(-\infty)}_{\mu_n} \to F^{(-\infty)}_\mu$ at all continuity points of $\mu$. This is often shown as a consequence of Portmanteau's theorem for weak convergence.
\end{remark}

Both implications  ``(a) $\Rightarrow$ (b)'' and ``(b) $\Rightarrow$ (a)'' in Theorem \ref{thm:vague:pos} are false for signed measures. The first counterexample shows that $F^{(\alpha)}_{\mu_n} \to F^{(\alpha)}_\mu$ at the continuity points of $\mu$ does \emph{not} imply that $\vague_{n \to \infty}\mu_n = \mu$. It relies on $\{F^{(\alpha)}_{\mu_n}\}$ being unbounded on a compact set.
\begin{example}\label{example:sup_neccesary}
    Let $F_n:\R\to \R$ be supported on $[0,2/n]$ and linear between the points $\{0,1/n,2/n\}$ such that $F(0) := 0 =: F(2/n)$ and $F(1/n) := 2^n$; see \Cref{figure:sup_neccesary} for a clear visualisation. For $n \in \N$, let $\mu_n := \mu_{F_n}$ according to \Cref{thm:BVloc_to_g_signed_measure} and denote by $\mu$ the zero measure. Then for any $x \in \R$, we have $F^{(0)}_{\mu_n}(x) = F^{(0)}_n(x) \to F^{(0)}_\mu(x)$.

    Now take $f \in C_c(\R)$ such that
    \begin{equation*}
        f(x) := \begin{cases}
(1+x)\quad \textnormal{for }x \in [-1,0), \\
            (1-x)\quad \textnormal{for } x \in [0,1], \\
            ~0 \hspace{1.25cm}  \textnormal{for } x \in [-1,1]^c.
        \end{cases}
    \end{equation*}
    Then for $n \geq 2$.
    \begin{align*}
        I_{\mu_n}(f) & = 2^n\curlybrac{\int_{0}^{1/n} (1-x) \de x - \int_{1/n}^{2/n} (1-x)  \de x}= \frac{2^{n+1}}{n^2}.
    \end{align*}
    Thus, $I_{\mu_n}(f) \not \to I_\mu(f) =0$.

\end{example}

\begin{figure}[ht]
    \centering{
    \includegraphics[width=12cm]{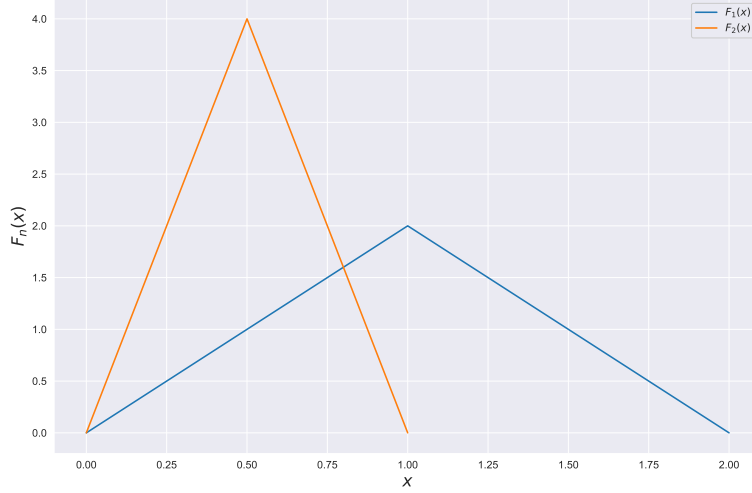}
    \caption{A visualisation of $F_1$ and $F_2$ defined in \cref{example:sup_neccesary}\label{figure:sup_neccesary}.}
    }
\end{figure}

The next counterexample shows that $\vague_{n \to \infty} \mu_n = \mu$  does not imply $F^{(\alpha)}_{\mu_n}\to F^{(\alpha)}_\mu$ at the continuity points of $\mu$ since mass can be lost locally. This happens when the positive and negative parts of the singular decompositions of $\{\mu_n\}$ cancel in the limit.

\begin{example}\label{example:weak_vs_vague_2}
    Let  $\mu_n := \da_0 - \da_{1/n}$, and let $\mu$ be the zero measure. Then it is straightforward to check that $\vague_{n \to \infty} \mu_n = \mu$ (even $\weak_{n \to \infty} \mu_n = \mu$).
   However, we do not have $F^{(0)}_{\mu_n} \to F^{(0)}_\mu$ at all continuity points of $\mu$. Indeed, fix $x > 0$. Then for $n \geq \frac{1}{x}$,
   \begin{equation*}
    F^{(0)}_{\mu_n}(x) = \da_0((0,x])- \da_{1/n}((0,x]) = -1
   \end{equation*}

   \begin{equation*}
    -1 = \lim_{n \to \infty}F^{(0)}_{\mu_n}(x) \neq F^{(0)}_{\mu}(x) = 0.
   \end{equation*}
 \end{example}

 Thus, in order to ensure that the distribution functions converge at continuity points, one must ensure that mass is preserved locally. This motivates the following definition.

\begin{definition}\label{def:no_mass}
    Let $\Omega$ be a metrisable space and $\{\mu_n\} \subset \cM(\Omega)$. We say that  the sequence $\{\mu_n\}$ \emph{has no mass at a point} $x \in \Omega$, if for any $\e > 0$, there exists an open neighbourhood $N_{x,\e}$ of $x$, such that
    \begin{equation*}
        \limsup_{n \to \infty}\modd{\mu_n}(N_{x,\e}) \leq \e.
    \end{equation*}
    In the case where $\Omega = \R$, we say that the sequence $\{\mu_n\}$ has no mass at $ +\infty \textnormal{ (resp.}-\infty\textnormal{)}$, when the family of canonical extensions of $\{\mu_n\}$ has no mass at $ +\infty \textnormal{ (resp.}-\infty\textnormal{)}$.
\end{definition}

\begin{remark}
\Cref{def:no_mass} implies that the family $\{\mu_n\} \subset \cM(\R)$ is tight if and only if it has no mass at $+\infty$ and $-\infty$.
\end{remark}

 For $\{\mu_n\} \subset \cM(\R)$, the preceding discussion leads us to a clear characterisation of vague and weak convergence of $\{\mu_n\}$ from the convergence of $F_{\mu_n}$, and vice versa.

\begin{theorem}\label{thm:vague_F_equivalence}
Let $\alpha \in \R$ and $\{\mu_n\}\cup \{\mu\}\subset \cM(\R)$.
    \begin{enumerate}[\normalfont(a)]
        \item If $F^{(\alpha)}_{\mu_n}(x) \to F^{(\alpha)}_{\mu}(x)$ at all continuity points of $\mu$ and $\{\mu_n\}$ is bounded on compact sets, then $\vague_{n \to \infty}\mu_n = \mu$.
        \item  If  $\vague_{n \to \infty}\mu_n = \mu$, $\alpha$ is a continuity point of $\mu$, and $\{\mu_n\}$ has no mass at the continuity points of $\mu$, then $F^{(\alpha)}_{\mu_n} \to F^{(\alpha)}_\mu$ at the continuity points  of $\mu$.
    \end{enumerate}
Moreover, if $\alpha = -\infty$ or $\alpha =+\infty$, both parts remain true if we require in addition in {\normalfont (a)} that $\{\mu_n\}$ is bounded on compact neighbourhoods of $\alpha$ (in the extended order topology) and in {\normalfont (b)} that $\{\mu_n\}$  has no mass at $\alpha$ (for the canonical extensions of $\{\mu_n\}$).
\end{theorem}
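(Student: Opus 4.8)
\textbf{Proof proposal for Theorem \ref{thm:vague_F_equivalence}.}

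The plan is to deduce both parts from the relation \eqref{eq:dist:meas:rel}, namely $F_{\mu_n}^{(\alpha)}(b) - F_{\mu_n}^{(\alpha)}(a) = \mu_n((a,b])$, together with the Portmanteau-type characterisations of vague convergence (Theorem \ref{thm:portmanteau_extension} and Theorem \ref{thm:appaendix_open}) and the bridge between vague convergence on $\R$ and weak convergence on the compactification already recorded in Section \ref{section:Vague_convergence}. I would first treat the case $\alpha \in \R$ and then reduce the cases $\alpha = \pm\infty$ to it.

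For part (a): assume $F^{(\alpha)}_{\mu_n}(x)\to F^{(\alpha)}_\mu(x)$ at all continuity points of $\mu$ and that $\{\mu_n\}$ is bounded on compact sets. I want to show $I_{\mu_n}(f)\to I_\mu(f)$ for every $f\in C_c(\R)$. The key observations are: (i) the set $D$ of continuity points of $\mu$ is co-countable, hence dense; (ii) by \eqref{eq:dist:meas:rel}, for $a<b$ in $D$ we get $\mu_n((a,b])\to\mu((a,b])$; (iii) since $\{\mu_n\}$ is bounded on the compact support $K$ of $f$, say $\sup_n|\mu_n|(K')\le C$ for a compact neighbourhood $K'$ of $K$, I can approximate $f$ uniformly by step functions $g=\sum_i c_i\I_{(a_i,b_i]}$ with all $a_i,b_i\in D$ and supports inside $K'$; then $I_{\mu_n}(g)\to I_\mu(g)$ directly from (ii), while $|I_{\mu_n}(f)-I_{\mu_n}(g)|\le C\|f-g\|_\infty$ and similarly for $\mu$. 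A standard $3\varepsilon$-argument closes this. (Equivalently, one can phrase (ii)–(iii) as: the measures $\mu_n$ restricted to $K'$ are uniformly bounded and converge on an intersection-stable generating $\pi$-system of intervals with endpoints in $D$, hence converge vaguely.)

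For part (b): assume $\vague_{n\to\infty}\mu_n=\mu$, that $\alpha$ is a continuity point of $\mu$, and that $\{\mu_n\}$ has no mass at the continuity points of $\mu$. Fix a continuity point $x$ of $\mu$; I must show $F^{(\alpha)}_{\mu_n}(x)\to F^{(\alpha)}_\mu(x)$, i.e. (taking $x\ge\alpha$, the other case being symmetric) $\mu_n((\alpha,x])\to\mu((\alpha,x])$. Since $(\alpha,x]$ is neither open nor compact, I cannot apply Portmanteau directly to its indicator; instead I sandwich. Given $\varepsilon>0$, use the no-mass hypothesis at $\alpha$ and at $x$ to pick open neighbourhoods $N_\alpha,N_x$ with $\limsup_n|\mu_n|(N_\alpha)\le\varepsilon$ and $\limsup_n|\mu_n|(N_x)\le\varepsilon$; then find continuity points $\alpha',\alpha'',x',x''$ of $\mu$ with $\alpha'<\alpha<\alpha''$ inside $N_\alpha$ and $x'<x<x''$ inside $N_x$, so that the compact set $[\alpha'',x']$ and the open set $(\alpha',x'')$ differ from $(\alpha,x]$ only inside $N_\alpha\cup N_x$. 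Applying Theorem \ref{thm:portmanteau_extension}/\ref{thm:appaendix_open} to the Hahn–Jordan pieces — or more cleanly, applying it to a continuous $f$ with $\I_{[\alpha'',x']}\le f\le\I_{(\alpha',x'')}$ against $\mu_n^\pm$, after first upgrading vague convergence of $\mu_n$ to vague convergence of $\mu_n^\pm$ via Proposition \ref{prop:vague_convergence_of_pm} (its compact-set condition \eqref{eq:vague_convergence_of_pm_compact_condition} follows from the no-mass hypothesis by a compactness/covering argument) — I get $\limsup_n|\mu_n((\alpha,x]) - \mu((\alpha,x])|\le C\varepsilon$ for a constant depending only on the limiting total variation. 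Letting $\varepsilon\downarrow0$ finishes it.

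Finally, for $\alpha=\pm\infty$: write $F^{(\pm\infty)}_{\mu_n}$ in terms of $F^{(\beta)}_{\mu_n}$ for a fixed finite continuity point $\beta$ of $\mu$ via $F^{(-\infty)}_{\mu_n}(x) = F^{(\beta)}_{\mu_n}(x) + \mu_n((-\infty,\beta])$ and $F^{(+\infty)}_{\mu_n}(x) = F^{(\beta)}_{\mu_n}(x) - \mu_n((x,\infty)) + \mu_n((\beta,\infty))$... more directly, the canonical extension of $\mu_n$ to $[-\infty,\infty]$ and the extra hypotheses (boundedness on, resp. no mass on, compact neighbourhoods of $\pm\infty$ in the extended topology) let me run exactly the same sandwiching argument with $\alpha''$ or $x''$ allowed to be the endpoint $\pm\infty$ itself; the no-mass condition at $\pm\infty$ is precisely what lets me choose the neighbourhood $N_{\pm\infty}$ controlling the tail, mirroring the role of $N_\alpha,N_x$ above. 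I expect the main obstacle to be the bookkeeping in part (b): one must carefully handle the half-open interval $(\alpha,x]$ by approximation from inside (compact) and outside (open), and one must verify that the no-mass hypothesis at the continuity points of $\mu$ really does deliver condition \eqref{eq:vague_convergence_of_pm_compact_condition} so that Proposition \ref{prop:vague_convergence_of_pm} applies — this needs covering an arbitrary compact $K$ by finitely many small no-mass neighbourhoods and patching, and is where the hypothesis ``at \emph{the continuity points of $\mu$}'' (rather than at all points) is used together with density of continuity points.
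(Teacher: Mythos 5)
Your part (a) is correct and takes a genuinely different (arguably more elementary) route than the paper: you approximate $f\in C_c(\R)$ uniformly by step functions with jump points at continuity points of $\mu$ and use \eqref{eq:dist:meas:rel} plus uniform boundedness on compacts in a $3\e$-argument, whereas the paper first proves convergence for $f\in C^1(\R)\cap C_c(\R)$ via integration by parts ($I_{\mu_n}(f)=-\int f'F^{(\alpha)}_{\mu_n}$, dominated convergence, using that $F^{(\alpha)}_{\mu_n}\to F^{(\alpha)}_\mu$ a.e.) and then densifies via Stone--Weierstra{\ss}. Both arguments are sound and cost the same hypotheses; yours avoids the appeal to Stone--Weierstra{\ss} at the price of checking the step-function approximation.

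Part (b) contains a genuine gap, and it sits exactly at the step you flag as the ``main obstacle'': the claim that the no-mass hypothesis at the continuity points of $\mu$ delivers condition \eqref{eq:vague_convergence_of_pm_compact_condition}, so that Proposition \ref{prop:vague_convergence_of_pm} upgrades $\vague_{n\to\infty}\mu_n=\mu$ to $\vague_{n\to\infty}\mu_n^\pm=\mu^\pm$. This implication is false. The covering argument you describe gives, for a compact $K$ covered by $J$ neighbourhoods each of $|\mu_n|$-mass at most $\e$, only $\limsup_{n\to\infty}\modd{\mu_n}(K)\leq J\e$; since $J=J(\e)\to\infty$ as $\e\downarrow 0$, this yields boundedness on compacts (which is how the paper proves Theorem \ref{corollary:distil}) but not \eqref{eq:vague_convergence_of_pm_compact_condition}. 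The paper's own Example \ref{example:diadic_2} is a direct counterexample: there $\{\mu_n\}$ has no mass at any point of $\R$ and $\vague_{n\to\infty}\mu_n=0$, yet $\modd{\mu_n}([-1,1])=2$ for all $n$ while $\modd{\mu}([-1,1])=0$, so $\vague_{n\to\infty}\modd{\mu_n}\neq\modd{\mu}$. Hence any route through the Hahn--Jordan pieces is blocked under the stated hypotheses.

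The repair is to drop the detour entirely: your sandwich already works when tested against $\mu_n$ itself. Take a single continuous $\rho$ with $\I_{[\alpha,x]}\le\rho\le\I_{(\alpha-\da,x+\da)}$ (the paper's trapezoidal cut-off), write
\begin{equation*}
\modd{F^{(\alpha)}_{\mu_n}(x)-F^{(\alpha)}_{\mu}(x)}\leq \modd{\textstyle\int(\I_{(\alpha,x]}-\rho)\de\mu_n}+\modd{\textstyle\int\rho\de\mu_n-\int\rho\de\mu}+\modd{\textstyle\int(\I_{(\alpha,x]}-\rho)\de\mu},
\end{equation*}
and note that the first and third terms are bounded by $\modd{\mu_n}$ resp.\ $\modd{\mu}$ of small neighbourhoods of $\alpha$ and $x$ only --- no Portmanteau statement for $\modd{\mu_n}$ is needed, just the no-mass hypothesis applied directly to those two points and $\sigma$-continuity of $\mu$ --- while the middle term vanishes by vague convergence of $\mu_n$. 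With this modification your argument for (b), and your reduction of the $\alpha=\pm\infty$ cases, goes through and coincides with the paper's proof.
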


\begin{proof}
We only establish the result for $\alpha \in \R$. The extension of the proof to $\alpha \in \{-\infty, \infty\}$ is straightforward.

    \normalfont(a) First, let $f \in \cC := C^1(\R) \cap C_c(\R)$. Then $f$ is supported by a compact interval $K \subset\R$, and we may assume without loss of generality that $\alpha \in K$. Then \{$F_{\mu_n}^{(\alpha)}\}$ is bounded on $K$ since
    \begin{equation*}
        \modd{F_{\mu_n}^{(\alpha)}(x)} \leq \sup_{n \in \N}\modd{\mu_n}(K) < \infty, \quad  x \in K.
    \end{equation*}
Moreover, $F^{(\alpha)}_{\mu_n}\to F^{(\alpha)}_{\mu}$ a.e.~by the fact that $\mu$ has only countably many atoms. Therefore, an integration by parts and the dominated convergence theorem give
    \begin{equation}
\label{eq:thm:vague_F_equivalence:convergence dist}
        I_{\mu_n}(f) = -\int_K f'(x) F_{\mu_n}^{(\alpha)}(x)~ \den x\rightarrow -\int_K f'(x) F_{\mu}^{(\alpha)}(x)~\den x  = I_{\mu}(f).
    \end{equation}
Next, let  $f \in C_c(\R)\subset C_0(\R)$ and $\e>0$.  Since $\cC$ is a subalgebra of $C_0(\R)$ that separates points and vanishes nowhere, it is dense in $C_0(\R)$ by the Stone-Weierstraß Theorem; see~\cref{thm:stoneWeierstrass}. Thus, there exists $g \in \cC$ such that $\norm{f-g}_\infty < \e$. Then $f$ and $g$ are both supported by some compact interval $L$. Hence, the triangle inequality and \eqref{eq:thm:vague_F_equivalence:convergence dist} give
    \begin{align*}
        \limsup_{n \to \infty}\modd{I_{\mu_n}(f) - I_{\mu}(f)} &\leq \limsup_{n \to \infty}\curvebrac{\modd{I_{\mu_n}(f-g)} + \modd{I_{\mu_n}(f) - I_{\mu}(f)} + \modd{I_{\mu}(f-g)}}\\
        &\leq \curvebrac{\sup_{n \in \N}\modd{\mu_n}(L) + \norm{\mu}}\e.
    \end{align*}
Using that $\{\mu_n\}$ is bounded on compact sets and taking $\e \downarrow 0$ establishes the claim.

(b) Let $t \in \R$ be a continuity point of $\mu$. The case when $t = \alpha$ is trivial, so we may assume without loss of generality that $t > \alpha$, since $F_{\mu}^{(\alpha)}(t) = -F_{\mu}^{(t)}(\alpha)$.

For $\delta > 0$, define the cut-off function $\rho_\delta \in C_c(\R)$ by
\begin{equation*}
\rho_\delta(x) =
\begin{cases}
0 & \text{if } x \notin (\alpha - \delta, t + \delta) \\
\frac{1}{\delta} (x + \delta - \alpha), & \text{if } x \in (\alpha -\delta, \alpha), \\
1 & \text{if } x \in [\alpha, t], \\
\frac{1}{\delta} (t + \delta - x), & \text{if } x \in (t, t+\delta),\\
\end{cases}
\end{equation*}
and for $x \in \R$, the open ball around $x$ of radius $\delta$ by $B_\delta(x)$. Then
    \begin{align}
        &\limsup_{n \to \infty}\curvebrac{\modd{F_{\mu_n}^{(\alpha)}(t) - F_{\mu}^{(\alpha)}(t)}}\nonumber\\
        \leq~&\limsup_{n\to \infty}\Big(\modd{\int(\I_{(\alpha,t]} - \rho)(x)\mu_n(\den x)  }+ \modd{\int \rho(x)\mu_n(\den x)  - \int \rho(x)\mu(\den x) }\nonumber\\
        &\quad +\modd{\int(\I_{(\alpha,t]} - \rho)(x)\mu(\den x)  }\Big) \nonumber \\
        \leq ~& \limsup_{n \to \infty}\Big( \modd{\mu_n}((\alpha - \da,\alpha])  + \modd{\mu_n}((t,t+\da)) + \modd{\mu}((\alpha - \da,\alpha])+ \modd{\mu}((t,t+\da)) \Big) \nonumber\\
        \leq ~& \limsup_{n \to \infty}\modd{\mu_n}(B_\delta(\alpha)) + \limsup_{n \to \infty}\modd{\mu_n}(B_\delta(t))+ \modd{\mu}(\alpha - \da,\alpha]) + \modd{\mu}((t,t+\da)). \label{eq:pf:thm:vague_F_equivalence}
    \end{align}
Now the result follows by taking $\delta \to 0$, noting that the first two terms on the right had side of \eqref{eq:pf:thm:vague_F_equivalence} vanish by the fact that $\{\mu_n\}$ has no mass at $t$ and $\alpha$, whereas the last two terms on the right had side of \eqref{eq:pf:thm:vague_F_equivalence} vanish by $\sigma$-continuity of $\mu$ and the fact that $\alpha$ is a continuity point of~$\mu$.
\end{proof}

We proceed to prove Theorem \ref{thm:vague:pos}, which is in fact a corollary to Theorem \ref{thm:vague_F_equivalence}.

\begin{proof}[Proof of Theorem \ref{thm:vague:pos}]
We only establish the result for $\alpha \in \R$. The extension of the proof to $\alpha \in \{-\infty, \infty\}$ is straightforward.

        ``(a) $\Rightarrow$ (b)''. By \Cref{thm:vague_F_equivalence}(a), it suffices to show that $\{ \mu_n \}$ are bounded on compact sets. So let $K \subset \R$ be a compact set. Then there exists continuity points $b_1, b_2 \in \R$ of $\mu$ such that $K \subset (b_1, b_2]$. By hypothesis, $\lim_{n \to \infty} F^{(\alpha)}_{\mu_{n}}(b) = F^{(\alpha)}_{\mu}(b)$ for $b \in \{b_1, b_2\}$. Moreover, $\mu_{n}((b_1, b_2]) = F^{(\alpha)}_{\mu_{n}}(b_2)-F^{(\alpha)}_{\mu_{n}}(b_1)$ for each $n \in \N$. Thus, by positivity of $\{ \mu_n \}$,
    \begin{equation*}
\limsup_{n \to \infty}\mu_{n}(K) \leq  \lim_{n \to \infty}\mu_{n}((b_1, b_2])  = \lim_{n \to \infty} F^{(\alpha)}_{\mu_{n}}(b_2) -\lim_{n \to \infty} F^{(\alpha)}_{\mu_{n}}(b_1) = F_\mu^{(\alpha)}(b_2) - F_\mu^{(\alpha)}(b_1) < \infty.
    \end{equation*}
    ``(b) $\Rightarrow$ (a)''. By \Cref{thm:vague_F_equivalence}(a), it suffices to sow that $\{ \mu_n \}$ has no mass at the continuity points of $\mu$. So let $x \in \R$ be a continuity point of $\mu$ and fix $\e > 0$. For $\delta > 0$, denote by $B_\delta(x)$ the open ball around $x$ of radius $\da$ and by $\overline{B_{\delta}(x)}$ its closure. By $\s$-continuity of $\mu$, for any $\e >0$ there exists $\da >0$ such that $\mu(\overline{B_\da(x)})\leq\e$. Thus, by \Cref{thm:portmanteau_extension}(b),
    \begin{equation*}
        \limsup_{n \to \infty}\mu_n\curvebrac{B_{\da}(x)} \leq \limsup_{n \to \infty}\mu_n\curvebrac{\overline{B_{\da}(x)}} \leq \mu \curvebrac{\overline{B_{\da}(x)} } \leq \e. \qedhere
    \end{equation*}
\end{proof}

\begin{remark}
The direction  ``(a) $\Rightarrow$ (b)'' in Theorem \ref{thm:vague:pos} (for $\alpha \in \R$) follows also directly from `(a) $\Rightarrow$ (c)'' in the vague Portmanteau Theorem, see \ref{thm:portmanteau_extension}.
\end{remark}

Compared to Theorem \ref{thm:vague:pos},  parts (a) and (b) in Theorem \ref{thm:vague_F_equivalence}  have an extra condition each. One might wonder if either part implies the hypothesis of the other one. We first show by a counterexample that part (a) in Theorem \ref{thm:vague_F_equivalence} does not imply the the hypothesis of part (b).

\begin{example}\label{example:diadic}
For $n \in \N$, let $F_n:\R \to \R$ be supported on $[-2^{-n},2^{-n}]$ and linear between the points $\{k 2^{-2n} :  k\in \{ -2^n, \ldots, 2^{n} \}  \}$ such that
    \begin{equation*}
        F_n\curvebrac{k 2^{-2n}} :=(k~\textnormal{mod}(2)) 2^{-n}, \quad k\in \{ -2^n, \ldots, 2^{n} \};
    \end{equation*}
see \Cref{figure:F_n} for a clear visualisation. Set $\mu_n := \mu_{F_n}$ and let $\mu$ be the zero measure. Then $\{\mu_n\} \subset \cM(\R)$ satisfies the properties:
    \begin{enumerate}[\normalfont(i)]
        \item $\mu_n$ is supported on $[-2^{-n},2^{-n}]$,
        \item $|\mu_n|([-2^{-n},2^{-n}]) = 2$,
        \item $\modd{F^{(0)}_{\mu_n}(x)} \leq 2^{-n}$ for all $x \in \R$.
    \end{enumerate}
It follows that $F^{(0)}_{\mu_n}\to F^{(0)}_{\mu}$ for all $x \in \R$  and $\{\mu_n\}$ is bounded on compact sets but $\{\mu_n\}$ has mass at $0$, which is a continuity point of $\mu$.
\end{example}

\begin{figure}[ht]
    \centering{
    \includegraphics[width=12cm]{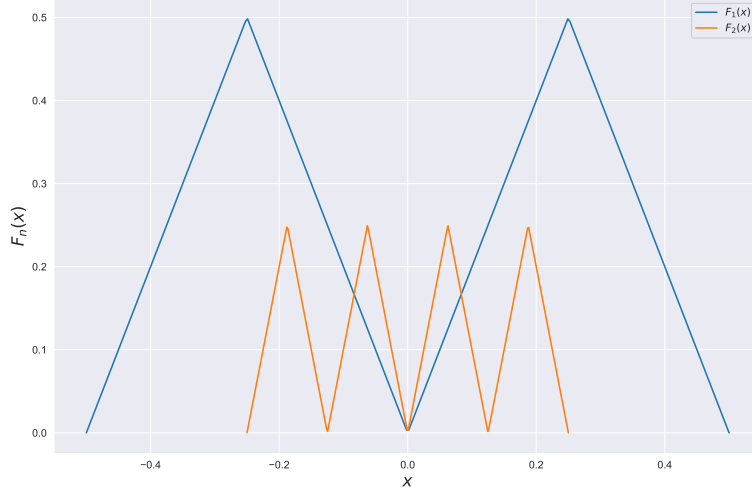}
    \caption{A visualisation of $F_1$ and $F_2$ defined in \cref{example:diadic}\label{figure:F_n}.}
    }
\end{figure}

Fortunately, the assumption that $\{\mu_{n}\}$ has no mass at \emph{any} point is sufficient to establish a proper equivalence result. Note that this slightly stronger assumption is equivalent to the original assumption in the important case that $\mu$ does not have any atoms.
\begin{theorem}\label{corollary:distil}
    Let $\{\mu_{n}\} \cup \{\mu\}\subset \cM(\R)$ and $\alpha \in \R$. Suppose $\{\mu_n\}$ does not have mass on any point of $\R$. Then the following are equivalent:
\begin{enumerate}
\item $F^{(\alpha)}_{\mu_n} \to F^{(\alpha)}_\mu$ at the continuity points of $\mu$.
\item $\vague_{n \to \infty}\mu_n = \mu$.
\end{enumerate}
Moreover,  if $\alpha = -\infty$ or $\alpha =+\infty$, the result remains to true under the additional assumption that $\{\mu_n\}$ has no mass at $\alpha$ (for the canonical extensions of $\mu_n$).
\end{theorem}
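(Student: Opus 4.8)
\textbf{Proof proposal for Theorem \ref{corollary:distil}.}

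The plan is to deduce this theorem as an immediate corollary of Theorem \ref{thm:vague_F_equivalence}, exploiting the fact that the hypothesis ``$\{\mu_n\}$ has no mass on any point of $\R$'' is strong enough to supply the extra condition needed in \emph{both} directions. First I would establish the implication ``(b) $\Rightarrow$ (a)''. Assuming $\vague_{n \to \infty}\mu_n = \mu$, I need to check the three hypotheses of Theorem \ref{thm:vague_F_equivalence}(b): that $\alpha$ is a continuity point of $\mu$, and that $\{\mu_n\}$ has no mass at the continuity points of $\mu$. The latter is immediate from the blanket assumption (no mass at \emph{any} point, a fortiori at the continuity points of $\mu$). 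The point $\alpha$, however, need not a priori be a continuity point of $\mu$; but again the no-mass assumption forces it to be one: if $\{\mu_n\}$ has no mass at $\alpha$, then for every $\e>0$ there is an open neighbourhood $N$ of $\alpha$ with $\limsup_n \modd{\mu_n}(N)\le\e$, and since $\vague_{n\to\infty}\mu_n=\mu$ implies (via the vague Portmanteau theorem, Theorem \ref{thm:portmanteau_extension}, applied to the open set $N$) that $\modd{\mu}(N) \le \liminf_n \modd{\mu_n}(N) \le \e$, we get $\modd{\mu}(\{\alpha\}) \le \e$ for all $\e>0$, hence $\mu(\{\alpha\})=0$. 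So Theorem \ref{thm:vague_F_equivalence}(b) applies and gives $F^{(\alpha)}_{\mu_n} \to F^{(\alpha)}_\mu$ at the continuity points of $\mu$.

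For the converse ``(a) $\Rightarrow$ (b)'', I would apply Theorem \ref{thm:vague_F_equivalence}(a), whose extra hypothesis is that $\{\mu_n\}$ is bounded on compact sets. This is where the no-mass-at-every-point assumption does the real work: I claim it implies boundedness on compact sets. Indeed, fix a compact $K\subset\R$. For each $x\in K$, applying the no-mass property with (say) $\e=1$ yields an open neighbourhood $N_x$ of $x$ with $\limsup_n \modd{\mu_n}(N_x)\le 1$, so there is $n_x\in\N$ with $\modd{\mu_n}(N_x)\le 2$ for all $n\ge n_x$. The sets $\{N_x\}_{x\in K}$ form an open cover of $K$; extract a finite subcover $N_{x_1},\dots,N_{x_m}$. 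Then for $n \ge \max_i n_{x_i}$ we have $\modd{\mu_n}(K) \le \sum_{i=1}^m \modd{\mu_n}(N_{x_i}) \le 2m$, and since finitely many of the remaining $\modd{\mu_n}(K)$ are each finite (as $\mu_n\in\cM(\R)$), we conclude $\sup_n \modd{\mu_n}(K) < \infty$. Thus $\{\mu_n\}$ is bounded on compact sets, Theorem \ref{thm:vague_F_equivalence}(a) applies, and $\vague_{n\to\infty}\mu_n=\mu$ follows.

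Finally, for the case $\alpha\in\{-\infty,+\infty\}$, the additional hypothesis ``$\{\mu_n\}$ has no mass at $\alpha$'' is exactly what the extended versions of Theorem \ref{thm:vague_F_equivalence}(a) and (b) require once we also know $\{\mu_n\}$ is bounded on compact neighbourhoods of $\alpha$ in the extended order topology --- but such a neighbourhood is the union of a ray $[-\infty,K)$ (or $(K,+\infty]$) together with a compact subset of $\R$, and the no-mass-at-$\alpha$ assumption controls the ray part (it bounds $\limsup_n\modd{\mu_n}$ on a neighbourhood of $\alpha$) while the already-established boundedness on compact subsets of $\R$ handles the rest, so the same covering argument applies. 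The only mild subtlety --- and the step I expect to need the most care --- is this bookkeeping in the extended-line case, making sure the ``no mass at $\alpha$'' condition for the canonical extensions is correctly translated into a bound on $\modd{\mu_n}$ over a genuine neighbourhood of $\alpha$; everything else is a routine invocation of Theorem \ref{thm:vague_F_equivalence} and a compactness argument.
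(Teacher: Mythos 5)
Your proposal is correct and follows essentially the same route as the paper: both directions reduce to Theorem \ref{thm:vague_F_equivalence}, with the only substantive work being the finite-subcover argument showing that ``no mass at any point'' implies $\{\mu_n\}$ is bounded on compact sets. Your additional verification that $\alpha$ is automatically a continuity point of $\mu$ in the ``(b) $\Rightarrow$ (a)'' direction is a worthwhile step the paper's proof glosses over; just note that the inequality $\modd{\mu}(N) \leq \liminf_{n\to\infty}\modd{\mu_n}(N)$ you invoke there is the signed-measure statement of \Cref{thm:appaendix_open}, not the positive-measure Portmanteau theorem \Cref{thm:portmanteau_extension} you cite.
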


\begin{proof}
We only establish the result for $\alpha \in \R$. The extension of the proof to $\alpha \in \{-\infty, \infty\}$ is straightforward.

By Theorem \eqref{thm:vague_F_equivalence}, it suffices to show that the assumption that $\{\mu_n\}$ has no mass on any point of $\R$ implies that $\{\mu_n\}$ is bounded on compact sets. So let $K \subset \R$ be a compact set.

By hypothesis, for each $x \in \R$, the exists an open neighbourhood $N_x$ of $x$ such that $\limsup_{n \to \infty} |\mu_n|(N_x) \leq 1$. Moreover, by compactness, there exists $x_1, \ldots, x_J\in \R$ such that $K \subset \bigcup_{j = 1}^J N_{x_j}$. It follows that
\begin{equation*}
\limsup_{n \to \infty} |\mu_n|(K) \leq \limsup_{n \to \infty} |\mu_n|\bigg(\bigcup_{j = 1}^J N_{x_j}\bigg) \leq \sum_{j =1}^J \limsup_{n \to \infty} |\mu_n|(N_{x_j}) \leq J < \infty. \qedhere
\end{equation*}
\end{proof}

We end this section by noting that the assumption that $\{\mu_n\}$ has no mass at any point of $\R$ is not enough to conclude from $\vague_{n \to \infty}\mu_n = \mu$ that $\vague_{n\to \infty}\modd{\mu_n} = \modd{\mu}$.

\begin{example}\label{example:diadic_2}
For $n \in \N$,  let $F_n:\R \to \R$ be supported on $[-1,1]$ and linear between the points $\{k 2^{-n}: k \in \{ 0,\dots, 2^{n} \}  \}$
    such that
     \begin{equation*}
 F_n\curvebrac{k 2^{-n}} :=(k~\textnormal{mod}(2)) 2^{-n}, \quad k\in \{ -2^n, \ldots, 2^{n} \};
     \end{equation*}
  see \Cref{figure:F_n_2} for a clear visualisation. Set $\mu_n := \mu_{F_n}$ and let $\mu$ be the zero measure. Note that $|\mu_n| = |\mu_1|$ for each $n \in \N$.  Hence it follows trivially that $\vague_{n \to \infty}|\mu_n| =|\mu_1|$. However, using that $\Vert \mu_n \Vert = 2$ and $|F^{(0)}_n| \leq 2^{-n}$ for each $n \in \N$, it follows from \Cref{thm:vague_F_equivalence} that $\vague_{n \to \infty}\mu_n = \mu$. It remains to show that $\{\mu_n\}$ has no mass at any point of $\R$. So fix $x \in \R$ and let $\e > 0$ be given. Let $N_{x,\e}$ be the open ball around $x$ of radius $\e/2$. Then
     \begin{equation}
     \limsup_{n \in \N} \modd{\mu_n}(N_{x,\e}) = \e
     \end{equation}
 \end{example}

\begin{figure}[ht]
    \centering{
    \includegraphics[width=12cm]{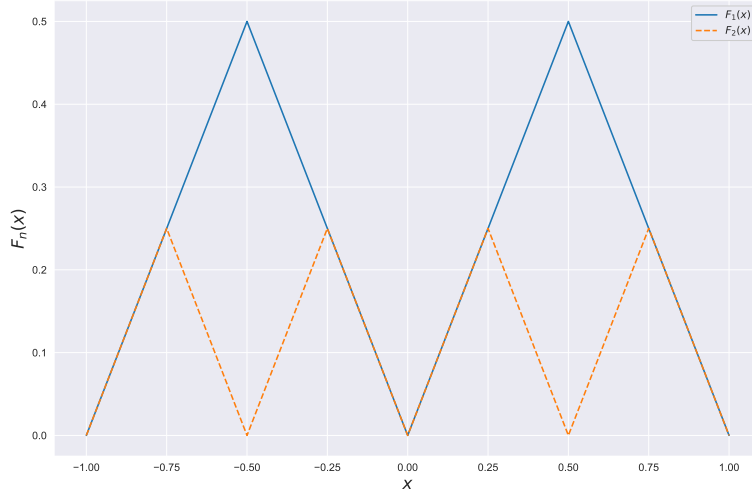}
    \caption{A visualisation of $F_1$ and $F_2$ defined in \cref{example:diadic_2}\label{figure:F_n_2}.}
    }
\end{figure}

\appendix

\section{Appendix}\label{section:appendix}

\subsection{Key results from Functional Analysis and Measure Theory}\label{section:appendix_weak_to_vague}

In this appendix, we collect some key results from Functional Analysis and Measure Theory that are used throughout this paper.

% \cite{de_branges_stone-weierstrass_1959}
First, we recall the classical Stone-Weierstraß Theorem, see e.g. \cite{de_branges_stone-weierstrass_1959}. To this end, recall that a subset $\cC \subset C_0(\Omega)$ \emph{vanishes nowhere} if for all $x\in \Omega$, there exists some $f \in \cC$ such that $f(x)\neq 0$, and it \emph{separates points} if for each $x,y \in \Omega$ with $x \neq y$, there exists $f \in \cC$ such that $f(x) \neq f(y)$.

\begin{theorem}[Stone-Weierstraß Theorem] \label{thm:stoneWeierstrass}
    Let $\Omega$ be a locally compact Hausdorff space and $\cC$ be a subalgebra of $C_0(\Omega)$. Then $\cC$ is dense in $C_0(\Omega)$ (for the topology of uniform convergence) if and only if it separates points and vanishes nowhere.
\end{theorem}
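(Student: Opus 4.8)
The plan is to prove the two implications separately: the ``only if'' direction is elementary, and the ``if'' direction I would reduce to the classical Stone--Weierstraß theorem on a compact Hausdorff space.

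For the ``only if'' direction, suppose $\cC$ is dense in $C_0(\Omega)$. Since $\Omega$ is locally compact Hausdorff, $C_0(\Omega)$ itself separates points and vanishes nowhere, by Urysohn's lemma, which supplies a function in $C_c(\Omega)$ equal to $1$ at any prescribed point and to $0$ at any other prescribed point (and bounded by $1$ throughout). Both properties pass to uniformly dense subalgebras: given $x \neq y$, pick $g \in C_0(\Omega)$ with $g(x) \neq g(y)$ and then $f \in \cC$ with $\norm{f - g}_\infty < \tfrac{1}{3}\modd{g(x) - g(y)}$, so that $f(x) \neq f(y)$; the vanishing-nowhere property is handled analogously.

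For the ``if'' direction, the subtlety is that one cannot adjoin constants to $\cC$ while staying inside $C_0(\Omega)$ unless $\Omega$ is compact, so I would first pass to the one-point compactification $\Omega_\infty = \Omega \cup \{\infty\}$ (see \Cref{def:alexandroff_topology}) and identify $C_0(\Omega)$ with the ideal $\{h \in C(\Omega_\infty) : h(\infty) = 0\}$. Let $\cA := \cC + \R\mathbf{1}$, viewed as a linear subspace of $C(\Omega_\infty)$. Then $\cA$ is a subalgebra, since $(f + c)(g + d) = fg + cg + df + cd$ with $fg, cg, df \in \cC$; it contains the constant functions; and it separates the points of $\Omega_\infty$: two distinct points of $\Omega$ are separated by some $f \in \cC$ because $\cC$ separates points, while any $x \in \Omega$ is separated from $\infty$ by some $f \in \cC$ with $f(x) \neq 0 = f(\infty)$ because $\cC$ vanishes nowhere. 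The classical Stone--Weierstraß theorem for the compact Hausdorff space $\Omega_\infty$ then gives that $\cA$ is dense in $C(\Omega_\infty)$. To descend back to $\cC$, fix $f \in C_0(\Omega)$ and $\e > 0$, regard $f$ as an element of $C(\Omega_\infty)$ vanishing at $\infty$, and choose $g + c \in \cA$ with $\norm{f - (g+c)}_\infty < \e/2$ on $\Omega_\infty$; evaluating at $\infty$ yields $\modd{c} < \e/2$, whence $\norm{f - g}_\infty \leq \norm{f - (g+c)}_\infty + \modd{c} < \e$ on $\Omega$, with $g \in \cC$.

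The only substantial input is the classical compact-space Stone--Weierstraß theorem invoked above; everything else is bookkeeping, the one point requiring care being precisely that $\cC$ is \emph{not} assumed to contain constants, which is why the detour through the compactification is the natural route. If one wanted a self-contained argument for the compact case, I would show that the uniform closure $\overline{\cA}$ of a point-separating subalgebra containing the constants is a closed subalgebra, upgrade it to a sublattice via a polynomial approximation of $t \mapsto \modd{t}$ on a compact interval (so that $\overline{\cA}$ is closed under $\modd{\cdot}$, hence under $\max$ and $\min$), and conclude by patching: for a target $h$ and tolerance $\e$ one builds functions in $\overline{\cA}$ agreeing with $h$ at prescribed pairs of points and combines finitely many of them by maxima and minima; alternatively one can cite the functional-analytic proof via extreme points of the annihilator, as in \cite{de_branges_stone-weierstrass_1959}.
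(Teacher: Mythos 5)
Your proof is correct. Note that the paper does not prove this statement at all: it is quoted as a classical result with a pointer to \cite{de_branges_stone-weierstrass_1959}, whose argument is the functional-analytic one via extreme points of the annihilator of $\cC$ in $\cM(\Omega)$ --- the route you mention only as an alternative at the end. Your main argument is instead the standard topological reduction: the ``only if'' direction by transporting the separation and non-vanishing properties of $C_0(\Omega)$ (obtained from the locally compact version of Urysohn's lemma) through a uniformly dense subalgebra, and the ``if'' direction by adjoining constants, passing to the one-point compactification $\Omega_\infty$, applying the compact-space Stone--Weierstraß theorem to $\cA = \cC + \R\mathbf{1}$, and descending by evaluating the approximant at $\infty$ to control the constant. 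All steps check out, including the two separation requirements on $\Omega_\infty$ (points of $\Omega$ from each other via separation, and from $\infty$ via non-vanishing) and the estimate $\modd{c} < \e/2$. Two cosmetic remarks: the paper's \Cref{def:alexandroff_topology} and \Cref{thm:alexandroff_topology} are stated for \emph{non-compact} $\Omega$, so the (trivial) case of compact $\Omega$, where $C_0(\Omega) = C(\Omega)$ and $\infty$ becomes an isolated point, deserves a one-line disclaimer; and since the paper works with real-valued functions throughout, you are right that no self-adjointness hypothesis is needed. Your approach has the advantage of being elementary modulo the compact case; the cited de Branges proof is shorter but requires the Riesz representation and Krein--Milman theorems as input.
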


Next, we state a \emph{vague} version of Portmanteau's Theorem for \emph{positive} measures. While it is very difficult to pinpoint an exact reference, the proof is extremely similar to the weak version; see e.g.\cite[Theorem 13.16]{klenke_probability_2014} and left to the reader.

    \begin{theorem}[Vague Portmanteau Theorem for positive measures]\label{thm:portmanteau_extension}
        Let $\Omega$ be {a locally compact metrisable space} and $\{\mu_n\}\cup \{\mu\} \in \cM^+(\Omega)$. Then the following are equivalent:
        \begin{enumerate}[\normalfont(a)]
            \item $\vague_{n \to \infty}\mu_n = \mu$.
            \item For any compact set $K \subset \Omega$,
            \begin{equation*}
                \limsup_{n \to \infty} \mu_n(K) \leq \mu(K)
            \end{equation*}
            and for any open set $\Theta \subset \Omega$,
            \begin{equation*}
                \liminf_{n \to \infty} \mu_n(\Theta) \geq \mu(\Theta).
            \end{equation*}
                \item For any set $A \subset \Omega$ such that $A \subset K$ for some compact set $K$ and ${\mu}(\partial A) =0 $,
            \begin{equation*}
                \lim_{n \to \infty}{\mu_n}(A) = {\mu}(A).
            \end{equation*}
        \end{enumerate}
        \end{theorem}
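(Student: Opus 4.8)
The plan is to prove the three statements equivalent by establishing the cycle $(a) \Rightarrow (b) \Rightarrow (c) \Rightarrow (a)$, following the blueprint of the classical weak Portmanteau theorem but replacing bounded continuous test functions by compactly supported ones, and taking care that --- unlike in the weak case --- no uniform bound on the total masses $\mu_n(\Omega)$ is available a priori.

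For $(a) \Rightarrow (b)$ I would fix a compact $K$, use local compactness together with Urysohn's lemma to approximate $\I_K$ from above by functions $f_j \in C_c(\Omega)$ with $\I_K \le f_j \le 1$, compact support, and $f_j \downarrow \I_K$ pointwise; then $\limsup_n \mu_n(K) \le \lim_n \int f_j \, \de \mu_n = \int f_j \, \de \mu$ for every $j$, and dominated convergence as $j \to \infty$ gives $\limsup_n \mu_n(K) \le \mu(K)$. For open $\Theta$ and compact $K \subseteq \Theta$, Urysohn gives $f \in C_c(\Omega)$ with $\I_K \le f \le \I_\Theta$, whence $\liminf_n \mu_n(\Theta) \ge \lim_n \int f \, \de \mu_n = \int f \, \de \mu \ge \mu(K)$; taking the supremum over compact $K \subseteq \Theta$ and using inner regularity of $\mu$ (which holds since $|\mu| = \mu$) yields $\liminf_n \mu_n(\Theta) \ge \mu(\Theta)$.

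For $(b) \Rightarrow (c)$, given $A \subseteq K$ with $K$ compact and $\mu(\partial A) = 0$: since $\Omega$ is metrisable, $K$ is closed, so $\overline{A} \subseteq K$ is compact, and $\overline{A} = A^\circ \sqcup \partial A$ forces $\mu(A^\circ) = \mu(A) = \mu(\overline{A})$. Then the compact half of (b) gives $\limsup_n \mu_n(A) \le \limsup_n \mu_n(\overline{A}) \le \mu(\overline{A}) = \mu(A)$ and the open half gives $\liminf_n \mu_n(A) \ge \liminf_n \mu_n(A^\circ) \ge \mu(A^\circ) = \mu(A)$, so $\mu_n(A) \to \mu(A)$.

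The bulk of the work is $(c) \Rightarrow (a)$, and here lies the main obstacle. Decomposing a test function $f \in C_c(\Omega)$ as $f = f^+ - f^-$ with $f^\pm \in C_c(\Omega)$ nonnegative, I may assume $f \ge 0$. The crucial point is to secure a uniform bound $\sup_n \mu_n(\{f > t\}) < \infty$ for $t > 0$: one cannot simply dominate by $\mu_n(\operatorname{supp} f)$ because $\operatorname{supp} f$ need not have $\mu$-null boundary. I would circumvent this by covering the compact set $\operatorname{supp} f$ by finitely many open balls $B_{r_i}(x_i)$ with compact closure and $\mu(\partial B_{r_i}(x_i)) = 0$ --- possible because, $\mu$ being finite, only countably many concentric spheres about a given point can carry positive $\mu$-mass --- and setting $K := \bigcup_i \overline{B_{r_i}(x_i)}$; then $\operatorname{supp} f \subseteq K^\circ$ and $\partial K \subseteq \bigcup_i \partial B_{r_i}(x_i)$ is $\mu$-null, so $(c)$ gives $\mu_n(K) \to \mu(K) < \infty$ and hence $M := \sup_n \mu_n(K) < \infty$. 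Finally I would apply the layer-cake formula $\int f \, \de \mu_n = \int_0^{\|f\|_\infty} \mu_n(\{f > t\}) \, \de t$: each $\{f > t\}$ is open, contained in $K$, with $\partial \{f > t\} \subseteq \{f = t\}$; since the level sets $\{f = t\}$, $t > 0$, are pairwise disjoint, $\mu(\{f = t\}) = 0$ for all but countably many $t$, and for such $t$ condition $(c)$ gives $\mu_n(\{f > t\}) \to \mu(\{f > t\})$; as $0 \le \mu_n(\{f > t\}) \le M$ for all $n$ and $t > 0$, dominated convergence yields $\int f \, \de \mu_n \to \int f \, \de \mu$, i.e. $\vague_{n \to \infty} \mu_n = \mu$. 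Everything apart from this uniform-bound construction is a routine transcription of the weak Portmanteau argument.
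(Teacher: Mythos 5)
Your proof is correct, and it is a genuine proof where the paper only offers a pointer: the published text leaves the argument to the reader as ``extremely similar to the weak version'' in Klenke, and the authors' own (suppressed) write-up differs from yours only in the last implication. For ``(a) $\Rightarrow$ (b)'' and ``(b) $\Rightarrow$ (c)'' you do essentially what they do (tent functions decreasing to $\I_K$, Urysohn functions sandwiched between a compact subset and $\Theta$ plus inner regularity, and the $A^\circ \subseteq A \subseteq \overline{A}$ squeeze), except that you prove the open-set inequality directly for positive measures where the paper routes it through the signed-measure statement \Cref{thm:appaendix_open}. For ``(c) $\Rightarrow$ (a)'' the paper partitions the range of $f$ at non-atom levels $y_i$ and compares $\int f\,\de\mu_n$ with the Riemann sums $\sum_i y_i\,\mu_n\bigl(f^{-1}([y_{i-1},y_i))\bigr)$, whereas you use the layer-cake formula $\int f\,\de\mu_n=\int_0^{\Vert f\Vert_\infty}\mu_n(\{f>t\})\,\den t$ and dominated convergence in $t$; these are interchangeable. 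What your version buys is that it isolates and honestly resolves the one point where the vague case is \emph{not} a routine transcription of the weak case, namely the need for a uniform bound $\sup_n\mu_n(K)<\infty$ on a compact $\mu$-continuity set $K$ containing $\operatorname{supp} f$ in its interior: your construction of $K$ as a finite union of closed balls with $\mu$-null spheres is exactly the right device, and the paper's sketch glosses over this (its Riemann-sum bound $\e\Vert\mu\Vert$ also quietly needs (c) to apply to each cell, including the awkward cell containing the level $0$). One cosmetic remark: your appeal to ``dominated convergence as $j\to\infty$'' for $\int f_j\,\de\mu\downarrow\mu(K)$ is really just monotone convergence for the finite measure $\mu$, but this is immaterial.
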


One part of the direction ``(a) $\Rightarrow$ (b)'' in the vague Portmanteau theorem (Theorem \ref{thm:portmanteau_extension} extends to signed measure. This result is attributed to Varadarajan; see \cite{varadarajan_measures_1965}. For the  convenience of the reader, we provide a short modern proof.

\begin{theorem}\label{thm:appaendix_open}
Let $\Omega$ be a locally compact normal Hausdorff space.  Let $\{\mu_n\} \cup \{\mu\}\subset \cM(\Omega)$ and assume that  $\vague_{n \to \infty}\mu_n = \mu$. Then for any open set $\Theta \subset \Omega$,
    \begin{equation}\label{eq:appendix_open}
        \modd{\mu}(\Theta) \leq \liminf_{n \to \infty}\modd{\mu_n}(\Theta).
    \end{equation}
    In particular, $\norm{\mu} \leq \liminf_{n \to \infty}\norm{\mu_n}$.
\end{theorem}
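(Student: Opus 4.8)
The plan is to deduce the inequality from a variational (dual) description of the variation measure restricted to an open set, and then to transfer the bound through vague convergence applied to compactly supported test functions.

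\textbf{Step 1: a dual formula for $\modd{\nu}$ on open sets.} I would first establish that for every open $\Theta \subseteq \Omega$ and every $\nu \in \cM(\Omega)$,
\[
  \modd{\nu}(\Theta) = \sup\Big\{ \int_\Omega f \de \nu \;:\; f \in C_c(\Omega),\ \operatorname{supp} f \subset \Theta,\ \norm{f}_\infty \le 1 \Big\}.
\]
(This is a localised version of the isometry in \Cref{thm:Riesz_Representation}, but it is just as quick to prove directly.) The inequality ``$\ge$'' is immediate from $\modd{\int_\Omega f \de \nu} \le \int_\Omega \modd{f}\de\modd{\nu} \le \modd{\nu}(\operatorname{supp} f) \le \modd{\nu}(\Theta)$. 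For ``$\le$'', fix $\e>0$, take a Hahn set $P \in \sB(\Omega)$ with $\nu^+ = \nu(\cdot \cap P)$ and $\nu^- = -\nu(\cdot \cap P^c)$, and use inner regularity of $\modd{\nu}$ (which holds because $\nu \in \cM(\Omega)$) to choose disjoint compact sets $K_1 \subset \Theta \cap P$ and $K_2 \subset \Theta \cap P^c$ with $\nu^+(\Theta) - \nu^+(K_1) < \e$ and $\nu^-(\Theta) - \nu^-(K_2) < \e$. Since $K_1 \subset \Theta\setminus K_2$ and $K_2 \subset \Theta \setminus K_1$ are compact subsets of open sets, the Urysohn-type lemma available under our hypotheses on $\Omega$ yields $f_1, f_2 \in C_c(\Omega)$ with $0 \le f_i \le 1$, $f_1 \equiv 1$ on $K_1$ and $\operatorname{supp} f_1 \subset \Theta \setminus K_2$, $f_2 \equiv 1$ on $K_2$ and $\operatorname{supp} f_2 \subset \Theta \setminus K_1$. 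Put $f := f_1 - f_2$; then $f \in C_c(\Omega)$, $\norm{f}_\infty \le 1$ and $\operatorname{supp} f \subset \Theta$. Splitting $\int_\Omega f \de \nu$ over $K_1$, $K_2$, and $\Theta \setminus (K_1 \cup K_2)$ and using that $f\equiv 1$ on $K_1$, $f\equiv -1$ on $K_2$, while $\nu^-$ does not charge $P$ and $\nu^+$ does not charge $P^c$, one obtains
\[
  \int_\Omega f \de \nu \ \ge\ \nu^+(K_1) + \nu^-(K_2) - \modd{\nu}\big(\Theta \setminus (K_1\cup K_2)\big)\ \ge\ \modd{\nu}(\Theta) - 4\e ,
\]
so letting $\e \downarrow 0$ gives ``$\le$''.

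\textbf{Step 2: passing to the limit and conclusion.} Let $\Theta \subseteq \Omega$ be open and set $L := \liminf_{n\to\infty} \modd{\mu_n}(\Theta) \in [0,\infty]$. For any admissible $f$ as in Step 1 we have $f \in C_c(\Omega)$, so vague convergence gives $\int_\Omega f \de \mu = \lim_{n\to\infty} \int_\Omega f \de \mu_n$, while $\modd{\int_\Omega f \de \mu_n} \le \int_\Omega \modd{f} \de \modd{\mu_n} \le \modd{\mu_n}(\operatorname{supp} f) \le \modd{\mu_n}(\Theta)$ for every $n$. Hence $\int_\Omega f \de \mu \le \modd{\int_\Omega f \de \mu} = \lim_{n\to\infty}\modd{\int_\Omega f \de \mu_n} \le L$. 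Taking the supremum over all admissible $f$ and applying the dual formula of Step 1 with $\nu = \mu$ yields $\modd{\mu}(\Theta) \le L$, which is \eqref{eq:appendix_open}. The ``in particular'' statement follows by taking $\Theta = \Omega$ (the constraint $\operatorname{supp} f \subset \Omega$ being automatic), giving $\norm{\mu} = \modd{\mu}(\Omega) \le \liminf_{n\to\infty}\modd{\mu_n}(\Omega) = \liminf_{n\to\infty}\norm{\mu_n}$.

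\textbf{Main obstacle.} The only genuinely delicate point is the dual formula of Step 1 — specifically, producing a single function in $C_c(\Omega)$ with sup-norm at most $1$ that is $+1$ on the ``positive'' compact piece of $\mu$, $-1$ on the ``negative'' one, and supported inside $\Theta$. This is exactly where local compactness together with the Hausdorff/normality assumptions are used (via the Urysohn-type lemma); one must be slightly careful that $\operatorname{supp} f_1$ and $\operatorname{supp} f_2$ may overlap — which is harmless, since $f_1,f_2 \in [0,1]$ forces $f_1-f_2 \in [-1,1]$ — while ensuring that neither support meets the opposite compact set. Everything downstream of this formula is routine bookkeeping.
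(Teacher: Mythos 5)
Your proposal is correct and follows essentially the same route as the paper: the paper also obtains, via inner regularity, the Hahn decomposition and Urysohn's lemma, a function $f \in C_c(\Omega)$ with $\modd{f}\leq 1$, $\operatorname{supp}(f)\subset\Theta$ and $\int f\de\mu \geq \modd{\mu}(\Theta)-\e$, and then passes to the limit by vague convergence exactly as in your Step 2. Your Step 1 merely packages this as an explicit dual formula and spells out the construction of $f = f_1 - f_2$ that the paper leaves implicit.
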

\begin{proof}
Let $\Theta \subset \Omega$ be open and  $\e > 0$. Since $\mu$ is inner regular and $\Omega$ is normal and locally compact, as a consequence of Urysohn's lemma \cite[Lemma 2.46]{aliprantis_infinite_1999}, there exists $f \in C_c(\Omega)$ such that $\modd{f}\leq 1$, $\textnormal{supp}(f) \subset \Theta$ and
\begin{equation*}
    \int f \de\mu \geq \modd{\mu}(\Theta) - \e.
\end{equation*}
Then by vague convergence of $\{\mu_n\}$,
\begin{align*}
    \modd{\mu}(\Theta) - \e &\leq \int f \de \mu = \lim_{n \to \infty}\int f \de \mu_n \leq \liminf_{n \to \infty}\int |f| \de \modd{\mu_n} \leq \liminf_{n \to \infty}\modd{\mu_n}(\Theta)
\end{align*}
Now the result follows by letting $\e \downarrow 0$.
\end{proof}

Finally, we state a version of Prohorov's theorem for \emph{signed} measures.
%  \cite[8.6.2]{bogachev_measure_2007}.

\begin{theorem}[Prohorov's Theorem]\label{thm:Prohorov}
    Let $\Omega$ be a metrisable space and $\mathbf{M} \subset \cM(\Omega)$ nonempty.
    \begin{enumerate}[\normalfont(a)]
        \item If $\mathbf{M}$ is uniformly bounded and tight, then $\mathbf{M}$ is weakly relatively sequentially compact.
        \item If the space $\Omega$ is Polish and $\mathbf{M}$ is weakly relatively sequentially compact, then  $\mathbf{M}$ is uniformly bounded and tight.
    \end{enumerate}
\end{theorem}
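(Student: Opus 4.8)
The plan is to obtain both parts by reducing to the classical Prohorov theorem for \emph{positive} measures, together with standard functional analysis.

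\emph{Part (a).} The natural route is via the Hahn--Jordan decomposition. Given a sequence $\{\mu_n\} \subset \mathbf{M}$, the sequences $\{\mu_n^+\}$ and $\{\mu_n^-\}$ in $\cM^+(\Omega)$ are uniformly bounded by $\sup_n \norm{\mu_n}<\infty$ and tight, since $\mu_n^\pm(A) \le \modd{\mu_n}(A)$ for every Borel set $A$. The sufficiency direction of Prohorov's theorem for positive measures --- which holds on any metrisable space --- then yields a common subsequence $\{n_j\}$ with $\mu_{n_j}^+ \to \nu^+$ and $\mu_{n_j}^- \to \nu^-$ weakly; the limits lie in $\cM^+(\Omega)$ because they are still tight (they inherit the compact sets witnessing tightness of $\{\mu_{n_j}^\pm\}$ via the Portmanteau inequality for open sets) and hence Radon. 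Then $\mu_{n_j} \to \nu := \nu^+ - \nu^-$ weakly, and $\nu \in \cM(\Omega)$ because $\modd{\nu} \le \nu^+ + \nu^-$, which is tight and therefore Radon. This part is routine.

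\emph{Part (b): uniform boundedness.} This is an application of the Banach--Steinhaus theorem. If $\mathbf{M}$ were not uniformly bounded, pick $\mu_n \in \mathbf{M}$ with $\norm{\mu_n} \to \infty$; then every subsequence also has total variations tending to infinity. But a weakly convergent subsequence $\{\mu_{n_j}\}$ provides a pointwise convergent --- hence pointwise bounded --- sequence of functionals $I_{\mu_{n_j}}$ on the Banach space $C_b(\Omega)$, so $\sup_j \norm{I_{\mu_{n_j}}}_{(C_b(\Omega))^*} < \infty$ by the uniform boundedness principle. Since $\norm{I_\mu}_{(C_b(\Omega))^*} = \norm{\mu}$ for $\mu \in \cM(\Omega)$ --- the non-trivial inequality following from inner regularity of $\modd{\mu}$ together with a Urysohn function on the metrisable space $\Omega$ approximating $\I_P - \I_N$ for a Hahn decomposition $\Omega = P \cup N$ --- this contradicts $\norm{\mu_{n_j}} \to \infty$. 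Hence $\mathbf{M}$ is uniformly bounded.

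\emph{Part (b): tightness.} This is the substantive statement. The plan is once more to reduce to the positive case: it suffices to show that the families $\mathbf{M}^\pm := \{\mu^\pm : \mu \in \mathbf{M}\}$ are tight, for then $\{\modd{\mu} : \mu \in \mathbf{M}\} = \{\mu^+ + \mu^- : \mu \in \mathbf{M}\}$ is tight and so is $\mathbf{M}$ by definition; and by the converse direction of Prohorov's theorem for positive measures on a Polish space it is enough to prove that $\mathbf{M}^\pm$ are weakly relatively sequentially compact. Fix $\mu_k \in \mathbf{M}$; using the hypothesis, pass to a subsequence with $\mu_k \to \mu$ weakly in $\cM(\Omega)$, and try to extract weakly convergent subsequences of $\{\mu_k^+\}$ and $\{\mu_k^-\}$. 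Embedding $\Omega$ as a Borel subset of a compact metrisable $\overline{\Omega}$ and extending measures by zero, the norm-bounded sequences $\{\overline{\mu_k^\pm}\}$ in $\cM(\overline{\Omega}) \cong (C(\overline{\Omega}))^*$ (whose predual is separable) have, along a further subsequence, weak-$*$ limits $\sigma^\pm \ge 0$; testing against restrictions to $\Omega$ of elements of $C(\overline{\Omega})$ and using $\mu_k \to \mu$ weakly shows $\sigma^+ - \sigma^-$ equals the zero-extension of $\mu$, hence is concentrated on $\Omega$. The step I expect to be the main obstacle is upgrading this to: $\sigma^+$ (equivalently $\sigma^-$) is itself concentrated on $\Omega$ --- once that holds, $\overline{\mu_k^+} \to \sigma^+$ weak-$*$ with $\sigma^+$ a Radon measure carried by $\Omega$ forces $\{\mu_k^+\}$ to be tight on $\Omega$ (apply the Portmanteau theorem on $\overline{\Omega}$ to a thin open neighbourhood of a compact subset of $\Omega$ on which $\sigma^+$ is almost full), which supplies the desired subsequence and closes the argument. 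Intuitively $\sigma^+$ charges no mass in $\overline{\Omega} \setminus \Omega$ because otherwise the ``escaping'' positive and negative parts of $\mu_k$ would have to cancel against \emph{every} bounded continuous function on $\Omega$, which is impossible; making this precise is the crux, and is close in spirit to Varadarajan's rigidity results (compare \Cref{thm:appaendix_open} and \cite{varadarajan_measures_1965}). An essentially equivalent, more hands-on route to the same difficulty: assume tightness fails, use separability and completeness of $\Omega$ to construct $\mu_k \in \mathbf{M}$ together with pairwise disjoint, outward-escaping compact sets $E_k$ --- each contained in the positive or negative Hahn set of $\mu_k$ --- with $\modd{\mu_k(E_k)}$ bounded away from zero, pass to $\mu_k \to \mu$ weakly, and derive a contradiction by showing that $\mu_k(E_k) \to 0$.
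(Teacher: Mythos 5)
Part (a) of your proposal is correct and is exactly the paper's argument: Hahn--Jordan decomposition, the positive Prohorov theorem applied to $\{\mu_n^+\}$ and $\{\mu_n^-\}$, and a nested subsequence. Your Banach--Steinhaus argument for the uniform boundedness half of part (b) is also correct and complete, resting on the identity $\norm{I_\mu}_{(C_b(\Omega))^*}=\norm{\mu}$, which does follow from inner regularity of $\modd{\mu}$ and a Urysohn function as you indicate. (For the record, the paper does not prove part (b) at all; it cites \cite[Theorem 8.6.2]{bogachev_measure_2007}, so for that half you are attempting more than the paper does.)

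The tightness half of part (b) has a genuine gap, and you have located it yourself: after embedding into a compact metrisable $\overline{\Omega}$ and identifying $\sigma^+-\sigma^-$ with the zero-extension of $\mu$, the claim that $\sigma^+$ charges no mass on $\overline{\Omega}\setminus\Omega$ is not a detail to be ``made precise'' --- it is the entire content of the theorem. A priori the restrictions of $\sigma^+$ and $\sigma^-$ to $\overline{\Omega}\setminus\Omega$ can agree and be nonzero, which is exactly the scenario where $\{\mu_k^+\}$ and $\{\mu_k^-\}$ both fail to be tight while $\{\mu_k\}$ still converges weakly on $\Omega$; your heuristic that the escaping masses ``would have to cancel against every $f\in C_b(\Omega)$, which is impossible'' is precisely the statement to be proved, and it is false for individual test functions (the failure only shows up after a careful selection). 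Ruling it out requires a genuinely new ingredient --- a sliding-hump construction in the spirit of the Phillips/Dieudonn\'e--Grothendieck lemmas, producing disjoint open sets $U_k$ escaping every compact set with $\modd{\mu_k}(U_k)\geq c>0$ and then a single $f\in C_b(\Omega)$ supported on $\bigcup_k U_k$ along which $I_{\mu_k}(f)$ fails to converge --- and neither your functional-analytic route nor your ``hands-on'' route supplies it (the latter ends at the unproved assertion ``$\mu_k(E_k)\to 0$''). A secondary, fixable issue: the closing step ``apply Portmanteau to a thin open neighbourhood of a compact $K\subset\Omega$'' fails for general Polish $\Omega$, since $K$ need not have a neighbourhood in $\overline{\Omega}$ whose closure stays in $\Omega$ (take $\Omega$ the irrationals in $[0,1]$); one should instead deduce $\mu_k^+\to\sigma^+|_\Omega$ weakly on $\Omega$ from the open-set Portmanteau bound together with convergence of total masses, and then invoke the positive Prohorov theorem on $\Omega$.
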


\begin{proof}
    \normalfont(a) Take any $\{\mu_n\} \subset \mathbf{M}$. Since $\mathbf{M}$ is a uniformly bounded and tight sequence, both $\{\mu_n^+\}$ and $\{\mu_n^-\}$ are uniformly bounded and tight. By \cite[Theorem 13.29]{klenke_probability_2014}, it follows that there exists a subsequence $\{n_k\}$ such that $\weak_{k \to \infty}\mu_{n_k}^+ = \nu$, for some positive measure $\nu \in \cM(\Omega)$. Similarly, there exists a subsequence $\{n_{k_l}\} \subset \{n_k\}$ such that $\weak_{l \to \infty}\mu_{n_{k_l}}^- = \eta$, for some positive measure $\cM(\Omega)$. Thus it follows that $\weak_{l \to \infty}\mu_{n_{k_l}} = (\nu - \eta) \in \cM(\Omega)$.

    \normalfont(b) See \cite[Theorem 8.6.2]{bogachev_measure_2007}.
\end{proof}

\subsection{One-point compactification}
In this appendix, we recall the one-point compactification of a non-compact locally compact Hausdorff space.

\begin{definition}
\label{def:alexandroff_topology}
 Let $\Omega$ be a non-compact locally compact Hausdorff space with topology $\boldsymbol{\tau}$. Set $\Omega_\infty := \Omega \cup \{\infty\}$, where $\infty \not \in \Omega$, and let
    \begin{equation*}
        \boldsymbol{\tau}_\infty := \boldsymbol{\tau} \cup \{\Omega_\infty \backslash K : K \subset \Omega \textnormal{ is compact}\}
    \end{equation*}
Then $\Omega_\infty$ (with the topology $\boldsymbol{\tau}_\infty)$ is called the \emph{one-point} compactification of $\Omega$.
\end{definition}

The one-point compactification of a non-compact locally compact Hausdorff space has nice properties; see \cite[Proposition 4.36]{folland_real_1999} for a proof.
\begin{theorem}\label{thm:alexandroff_topology}
    Let $\Omega$ be a non-compact locally compact Hausdorff space. Then $\Omega_\infty$ is a compact Hausdorff space and $\Omega$ is an open dense subset of $\Omega_\infty$. Moreover, $f \in C(\Omega)$ extends continuously to $f_\infty \in C(\Omega_\infty)$ if and only if $f = f_0 +c$ where $f_0\in C_0(\Omega)$ and $c$ is a constant. In this case, the extension satisfies $f_\infty(\infty) =c$.
\end{theorem}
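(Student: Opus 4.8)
The plan is to proceed in three stages: first verify that $\boldsymbol{\tau}_\infty$ is genuinely a topology, then establish compactness, the Hausdorff property, and the open/dense claims, and finally characterise the continuously extendable $f \in C(\Omega)$. Throughout I would rely only on elementary point-set facts: in a Hausdorff space compact sets are closed, finite unions of compacts are compact, and a closed subset of a compact set is compact (so arbitrary intersections of compacts are compact). For the topology axioms, $\emptyset \in \boldsymbol{\tau} \subseteq \boldsymbol{\tau}_\infty$ and $\Omega_\infty = \Omega_\infty \setminus \emptyset \in \boldsymbol{\tau}_\infty$, and closure under finite intersections and arbitrary unions follows by a short case analysis according to whether each set lies in $\boldsymbol{\tau}$ or has the form $\Omega_\infty \setminus K$: the relevant identities are $(\Omega_\infty \setminus K_1) \cap (\Omega_\infty \setminus K_2) = \Omega_\infty \setminus (K_1 \cup K_2)$, $U \cap (\Omega_\infty \setminus K) = U \setminus K$ (open in $\Omega$ since $K$ is closed there), $\bigcup_i (\Omega_\infty \setminus K_i) = \Omega_\infty \setminus \bigcap_i K_i$, and $U \cup (\Omega_\infty \setminus K) = \Omega_\infty \setminus \big(K \cap (\Omega \setminus U)\big)$ with $K \cap (\Omega \setminus U)$ compact; the general case is assembled from these.

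Since $\boldsymbol{\tau}_\infty$ is literally $\boldsymbol{\tau}$ together with the sets $\Omega_\infty \setminus K$, every open set containing $\infty$ has the form $\Omega_\infty \setminus K$ for some compact $K \subset \Omega$. For compactness I would take an arbitrary open cover of $\Omega_\infty$, pick a member $U \ni \infty$, write $U = \Omega_\infty \setminus K$, extract a finite subcover of the compact set $K$ from the remaining members, and adjoin $U$. For the Hausdorff property, two points of $\Omega$ are separated already in $\boldsymbol{\tau}$, while to separate $x \in \Omega$ from $\infty$ I would use local compactness to choose an open $V \ni x$ with $\overline{V}$ compact and take the disjoint open sets $V$ and $\Omega_\infty \setminus \overline{V}$. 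That $\Omega \in \boldsymbol{\tau} \subseteq \boldsymbol{\tau}_\infty$ is open is immediate; for density I would use that $\Omega$ is non-compact, so for every compact $K \subset \Omega$ we have $K \subsetneq \Omega$, hence every basic neighbourhood $\Omega_\infty \setminus K$ of $\infty$ contains the nonempty set $\Omega \setminus K \subseteq \Omega$, giving $\overline{\Omega} = \Omega_\infty$.

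For the function statement, if $f = f_0 + c$ with $f_0 \in C_0(\Omega)$, I would define $f_\infty := f$ on $\Omega$ and $f_\infty(\infty) := c$; continuity on the open subset $\Omega$ is automatic, and continuity at $\infty$ holds because for $\e > 0$ there is a compact $K$ with $|f_0| < \e$ on $K^c$, so $|f_\infty - c| < \e$ on the neighbourhood $\Omega_\infty \setminus K$ of $\infty$. Conversely, given a continuous extension $f_\infty \in C(\Omega_\infty)$, set $c := f_\infty(\infty)$ and $f_0 := f - c$ on $\Omega$; for each $\e > 0$ the set $\{\,|f_\infty - c| < \e\,\}$ is open and contains $\infty$, hence contains a basic neighbourhood $\Omega_\infty \setminus K$, whence $|f_0| < \e$ on $K^c$. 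This shows $f_0 \in C_0(\Omega)$ and $f = f_0 + c$ with the extension necessarily satisfying $f_\infty(\infty) = c$.

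The only mildly delicate point, and the step I would be most careful with, is the case analysis showing $\boldsymbol{\tau}_\infty$ is closed under unions and intersections, since it relies on arbitrary intersections of compacts being compact in the Hausdorff setting and on tracking complements in $\Omega_\infty$ correctly; everything else is a direct unwinding of the definitions of $\boldsymbol{\tau}_\infty$, $C_0(\Omega)$, and continuity. As this is the classical Alexandroff construction, one may alternatively simply invoke \cite[Proposition 4.36]{folland_real_1999}, as already indicated before the statement.
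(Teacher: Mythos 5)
Your proposal is correct and complete: the case analysis for the topology axioms, the compactness and Hausdorff arguments, the density argument using non-compactness, and the two directions of the $C_0$ characterisation all check out (including the key identities $U\cup(\Omega_\infty\setminus K)=\Omega_\infty\setminus\bigl(K\cap(\Omega\setminus U)\bigr)$ and the fact that the subspace topology induced on $\Omega$ coincides with $\boldsymbol{\tau}$). The paper itself offers no proof but simply cites \cite[Proposition 4.36]{folland_real_1999}, and your argument is precisely the standard one given there, so there is nothing substantive to compare.
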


\bibliographystyle{amsplain} % We choose the "plain" creference
\bibliography{vague}

\end{document}